\documentclass[12pt]{article}
\usepackage{amsmath,amssymb,amsfonts}
\usepackage{bm}
\usepackage{graphicx}
\usepackage[margin=1in]{geometry}
\usepackage[hidelinks]{hyperref}

\newtheorem{lemma}{Lemma}
\newtheorem{proposition}{Proposition}
\newtheorem{theorem}{Theorem}
\newtheorem{corollary}{Corollary}

\title{Wavelet covariances under fixed finite observations:\\
dilation symmetry breaking and distinct endpoint limits}
\author{Jun Tsuzurugi\\
Department of Information Science and Engineering,\\
Okayama University of Science, Japan\\
\texttt{juntuzu@ous.ac.jp}}
\date{September 2026}

\begin{document}
\maketitle
\begingroup
\renewcommand{\thefootnote}{}
\footnotetext{Preprint of an article submitted for consideration in International Journal of Wavelets, Multiresolution and Information Processing. \copyright\ 2026 World Scientific Publishing Company. \url{https://www.worldscientific.com/worldscinet/ijwmip}}
\endgroup

\begin{abstract}
For a centered homogeneous generalized Gaussian field with spectral weight
$C|\omega|^{-\gamma}$, we prove two-endpoint recovery of normalized
posterior wavelet covariances under fixed finite admissible observations.
Global dilation symmetry holds if and only if all observation functions
vanish.  The nonzero wavelet and the observation functions are real
Schwartz functions with $M\geq1$ vanishing moments, with $C>0$ and
$\gamma<2M+1$.  Noise is Gaussian and field-independent, and total
observation covariance is positive definite.
Under simultaneous dilation of target scales and positions, multiplication
by $\lambda^{-\gamma}$ recovers the prior covariance as
$\lambda\downarrow0$ and $\lambda\uparrow\infty$, uniformly on every
fixed compact target set.
If a nonzero observation function exists, additional finite first nonzero
Fourier orders $m_\psi,m_i$ at zero of the wavelet and nonzero observation functions
yield compact-uniform recovery bounds
$O(\lambda^{2m_\psi+1-\gamma})$ and
$O(\lambda^{-(2\min_i m_i+1-\gamma)})$, with leading coefficients and
cancellation-dependent improvements.  The bounds are exact leading
orders when their coefficients are nonzero.
For the Mat\'ern-type spectrum $S(\omega)=C(\kappa^2+\omega^2)^{-p}$,
$\kappa>0$, $1/2<p<M+1/2$, fixed finite noisy point observations yield
distinct compact-uniform limits: the homogeneous $C|\omega|^{-2p}$ kernel
after multiplication by $\lambda^{-2p}$ at small scales, and the
constant-spectrum $S(0)$ kernel without normalization at large scales.
A projection-norm criterion treats fixed closed observation subspaces;
Mexican-hat formulas illustrate the finite-observation results.
\end{abstract}

\section{Introduction}
Wavelet variances describe how fluctuations depend on scale, whereas
cross-scale covariances describe dependence between coefficients at
different scales and positions.  Spectral-filter formulas and power-law
wavelet variances are established tools in the analysis of fractal and
long-memory signals \cite{wornell1992,moulines2007}.
Coeurjolly, Amblard, and Achard obtain wavelet covariance and cross-spectral
formulas for multivariate fractional Brownian motion
\cite{coeurjolly2010}.  These prior covariance relations do not by
themselves describe the effect of holding a finite observation set fixed
while dilating the target wavelets.

Gaussian conditioning supplies the corresponding posterior covariance
as a finite-rank subtraction \cite[Eq.~(2.24)]{rasmussen2006}.
Related projection formulas for conditioning on finitely many linear
functionals appear in the theory of generalized Gaussian bridges
\cite[Theorem~3.1]{sottinen2014}.
Marcotte and Gloaguen construct exact conditioning of Gaussian fields
on discrete wavelet coefficients and hard point data, using their
covariance and cross-covariance matrices and cokriging post-conditioning
for larger images \cite{marcotte2015}.  This is a direct precedent for
conditioning in the wavelet setting.
These conditioning mechanisms are established; the question addressed
here is how the posterior correction interacts with simultaneous
dilation of wavelet scales and positions when the observations remain
fixed.
Moment asymptotic expansions of wavelet transforms at large and small
scales are also established \cite{pathak2014}.  Our quantitative result
uses this Taylor/moment mechanism to determine the normalized posterior
correction, rather than proposing a new expansion method.

The first object is a centered homogeneous generalized Gaussian field
on the test space $\mathcal S_M$, with spectral covariance weight
$C|\omega|^{-\gamma}$ and $\gamma<2M+1$.  An ordinary finite-variance
stationary process cannot have this pure power-law spectrum at every
frequency.  The ordinary-process formulas below fix the wavelet and
Fourier conventions; the field construction in Section~\ref{sec:posterior}
provides the setting for the homogeneous-field results.
For fixed finite admissible observations with field-independent Gaussian
noise and positive-definite total observation covariance,
Proposition~\ref{prop:posterior-recovery} proves that the posterior
covariance multiplied by $\lambda^{-\gamma}$ converges to the prior
covariance as $\lambda\downarrow0$ and as $\lambda\uparrow\infty$,
uniformly on each fixed compact set of target pairs.
Corollary~\ref{cor:broken-symmetry} gives the
corresponding necessary and sufficient condition for exact symmetry:
the posterior obeys the original dilation identity for all target pairs
and all positive dilation factors if and only if all observation test
functions vanish.  In particular, any nonzero admissible observation
function breaks the global identity.
Under additional finite Fourier-order assumptions,
Theorem~\ref{thm:quantitative-recovery} identifies the leading
cross-covariance coefficients and the corresponding posterior recovery
rates, including possible cancellations and unequal orders among
observation functions.

The main contribution is uniform two-endpoint recovery on compact target
sets together with this exact symmetry criterion, proved on an admissible
test-function space, with coefficient-level recovery rates under finite
Fourier-order assumptions.  Prior wavelet covariance formulas,
Gaussian projection formulas, and Taylor/moment expansions supply the
underlying tools; the results here determine the normalized posterior
correction and its global symmetry under fixed observations.
As an extension to ordinary processes, we treat the one-dimensional
Mat\'ern-type spectrum
$S(\omega)=C(\kappa^2+\omega^2)^{-p}$ and fixed noisy point data.
The Mat\'ern spectral form and related local scaling results are known
\cite[Eq.~(4.15)]{rasmussen2006} and \cite{limteo2009}; neither the spectral
substitution nor the Gaussian conditioning formula alone is claimed as
new.  The additional result is the compact-uniform posterior recovery
of two \emph{different} prior wavelet covariance kernels at the two
dilation endpoints, with a direct weak-limit proof for the point-data
cross-covariances (Theorem~\ref{thm:tempered-point-endpoints}).
As a separate extension, Proposition~\ref{prop:closed-observation-space}
determines exactly when normalized covariance recovers at either
endpoint after conditioning on a fixed, possibly infinite-dimensional
Gaussian observation subspace.  Symmetric finite-band and high-frequency
subspaces give, respectively, two-sided recovery and a one-sided failure.
Equations~\eqref{eq:mexican-covariance}--\eqref{eq:mexican-limits}
give a closed-form Mexican-hat illustration of the finite-observation
results for $\gamma<5$.
The logarithmic-scale interpretation uses the Lamperti correspondence
\cite{lamperti1962,burnecki1997}.  Neither the dilation identity nor the
posterior limits imply independence between scales or define a
renormalization-group transformation.  A wavelet construction with
explicit coarse-graining, such as that of Marchand and collaborators
\cite{marchand2022}, is a separate operation.  Section~\ref{sec:discussion}
discusses these distinctions and the limits of the fixed-observation
result.

\section{Scope and conventions}
Let $x(t)$ be a real Gaussian process with mean $m(t)$ and covariance
$K(t,t')$.  Let $\psi$ be a real wavelet, and restrict scales to $a>0$.
The $L^2$-normalized continuous wavelet transform is
\begin{equation}
 \psi_{a,b}(t)=a^{-1/2}\psi\!\left(\frac{t-b}{a}\right),
 \qquad
 W(a,b)=\int_{\mathbb R}x(t)\psi_{a,b}(t)\,dt .
 \label{eq:cwt}
\end{equation}
For an ordinary process, the integral is interpreted in mean square:
the centered integral must exist in mean square and the deterministic
mean integral in Eq.~\eqref{eq:mean} must exist and be finite.
For a generalized or intrinsic Gaussian field, $W(a,b)$ means
the corresponding Gaussian linear functional; its test wavelet must be
admissible for that field.  We use the Fourier convention
\begin{equation}
 \widehat f(\omega)=\int_{\mathbb R}f(t)e^{-i\omega t}\,dt,
 \qquad
 \widehat\psi_{a,b}(\omega)
   =\sqrt a\,e^{-i\omega b}\widehat\psi(a\omega).
 \label{eq:fourier}
\end{equation}
All covariance integrals below are assumed finite.  These qualifications
are needed even though the Gaussianity statement itself is elementary.

\section{One coefficient and the full covariance}
For a general Gaussian process, the coefficient mean and covariance are
\begin{align}
 \mu(a,b)&=\int_{\mathbb R}m(t)\psi_{a,b}(t)\,dt,\label{eq:mean}\\
 C_W\bigl((a,b),(a',b')\bigr)
   &=\int_{\mathbb R}\!\int_{\mathbb R}
       \psi_{a,b}(t)K(t,t')\psi_{a',b'}(t')\,dt\,dt'.
       \label{eq:kernel}
\end{align}
In particular, if $V(a,b)=C_W((a,b),(a,b))>0$, then
\begin{equation}
 p_{a,b}(w)=\frac{1}{\sqrt{2\pi V(a,b)}}
 \exp\!\left[-\frac{(w-\mu(a,b))^2}{2V(a,b)}\right].
 \label{eq:single}
\end{equation}
If $V(a,b)=0$, the law is a point mass instead of a density.
For any finite set $z_i=(a_i,b_i)$, the vector
$\bm W=(W(z_1),\ldots,W(z_n))^{\mathsf T}$ is Gaussian with mean
$(\mu(z_i))_i$ and covariance matrix
$\Sigma_{ij}=C_W(z_i,z_j)$.  A multivariate density exists only when
$\Sigma$ is positive definite; continuous-wavelet coefficients may be
linearly dependent for a chosen finite set.  Thus a one-coefficient
variance does not characterize cross-scale dependence.
For a noncentered Gaussian process, the covariance must also be
supplemented by the mean in Eq.~\eqref{eq:mean} to determine these
finite-dimensional laws; the covariance alone does not determine the mean.

\section{Stationary spectral formula and variance scaling}
Now suppose that $x$ is centered and second-order stationary, with an
absolutely continuous spectral measure and
\begin{equation}
 K(t,t')=k(t-t')=\frac{1}{2\pi}
 \int_{\mathbb R}e^{i\omega(t-t')}S(\omega)\,d\omega,
 \qquad S(\omega)\geq 0 .
 \label{eq:psd}
\end{equation}
For an ordinary finite-variance process,
$k(0)=(2\pi)^{-1}\int S(\omega)\,d\omega<\infty$.
Equations~\eqref{eq:fourier} and \eqref{eq:psd} give the standard
spectral-filter formulas \cite{wornell1992,moulines2007}
\begin{align}
 C_W\bigl((a,b),(a',b')\bigr)
 &=\frac{\sqrt{aa'}}{2\pi}\int_{\mathbb R}
  S(\omega)e^{i\omega(b-b')}
  \overline{\widehat\psi(a\omega)}
  \widehat\psi(a'\omega)\,d\omega,
  \label{eq:crossspectral}\\
 V(a)&=\frac{a}{2\pi}\int_{\mathbb R}
  S(\omega)|\widehat\psi(a\omega)|^2\,d\omega.
  \label{eq:variance}
\end{align}
The factor $a$ in Eq.~\eqref{eq:variance} is required by the
$a^{-1/2}$ normalization in Eq.~\eqref{eq:cwt}.
Stationarity makes $V$ independent
of $b$; without stationarity, Eq.~\eqref{eq:kernel} is the appropriate
starting point and $V$ may depend on $b$.
The single-frequency stationary PSD representation in Eq.~\eqref{eq:psd}
is the one used here; no claim is made that nonstationary processes lack
other spectral representations.

Formally set $S(\omega)=C|\omega|^{-\gamma}$, $C>0$, and assume the
wavelet coefficient is well-defined and
\begin{equation}
 A_{\psi,\gamma}:=\frac{1}{2\pi}\int_{\mathbb R}
 |u|^{-\gamma}|\widehat\psi(u)|^2\,du<\infty.
 \label{eq:integrability}
\end{equation}
Changing variables $u=a\omega$ in Eq.~\eqref{eq:variance} yields
\begin{equation}
 \boxed{\quad V(a)=C A_{\psi,\gamma}a^{\gamma}.\quad}
 \label{eq:correctedpower}
\end{equation}
An $a^{-1}$ normalization in Eq.~\eqref{eq:cwt}, with all other
conventions unchanged, would instead give an $a^{\gamma-1}$ variance law.
As a check, for white noise ($\gamma=0$) the
$L^2$-normalized coefficient variance is scale independent.

\paragraph{The status of a pure power law.}
No $\gamma$ makes $C|\omega|^{-\gamma}$ integrable both at $0$
and at infinity: integrability at $0$ requires $\gamma<1$,
whereas integrability at infinity requires $\gamma>1$.
Consequently, Eq.~\eqref{eq:correctedpower} is not a statement about
an ordinary finite-variance stationary process with a pure power law
at every frequency.  It applies, for example, to a suitable generalized
or intrinsic Gaussian model tested against zero-moment wavelets;
alternatively, a regularized stationary spectrum can have a power-law
scaling regime over a restricted range of scales.  The latter does not
imply Eq.~\eqref{eq:correctedpower} exactly at all scales.
If $\widehat\psi(u)=O(|u|^M)$ near $0$ and
$\widehat\psi(u)=O(|u|^{-\beta})$ at infinity, sufficient conditions
for Eq.~\eqref{eq:integrability} are
$\gamma<2M+1$ and $\gamma+2\beta>1$.
Existence of the underlying generalized field, or a regularization of
the ordinary process, must be specified separately.

\section{Cross-scale scaling and the meaning of a flow}
\label{sec:scaling}
Under the same homogeneous-spectrum assumptions, put
$r=a'/a$ and $d=(b'-b)/a$.  Equation~\eqref{eq:crossspectral}
becomes
\begin{equation}
 C_W\bigl((a,b),(a',b')\bigr)
 =C a^{\gamma}F_{\psi,\gamma}(r,d),
 \quad
 F_{\psi,\gamma}(r,d)=\frac{\sqrt r}{2\pi}
 \int_{\mathbb R}|u|^{-\gamma}e^{-iud}
 \overline{\widehat\psi(u)}\widehat\psi(ru)\,du.
 \label{eq:scaling}
\end{equation}
For any $\lambda>0$ this implies the covariance identity
\begin{equation}
 C_W\bigl((\lambda a,\lambda b),(\lambda a',\lambda b')\bigr)
 =\lambda^{\gamma}C_W\bigl((a,b),(a',b')\bigr).
 \label{eq:jointscaling}
\end{equation}
For a centered Gaussian field, Eq.~\eqref{eq:jointscaling} also
gives, for any finite set $(a_i,b_i)$,
\begin{equation*}
 \bigl(\lambda^{-\gamma/2}W(\lambda a_i,\lambda b_i)\bigr)_i
 \stackrel{\mathrm d}{=}
 \bigl(W(a_i,b_i)\bigr)_i.
\end{equation*}
The equality concerns simultaneous scaling of both positions and
scales; it does not assert independence between different scales.

One possible definition of a normalized covariance flow can be made
precise.  For a reference exponent $\gamma_0$, define
\begin{equation*}
 Y_s(r,d)=e^{-\gamma_0s/2}W(e^s r,e^s d),\qquad
 G_s(z,z')=\operatorname{Cov}(Y_s(z),Y_s(z')),
 \quad z=(r,d),\ z'=(r',d').
\end{equation*}
For a general stationary spectrum,
\begin{equation}
 G_s=\frac{\sqrt{rr'}}{2\pi}
 \int_{\mathbb R}e^{-\gamma_0s}S(e^{-s}u)
 e^{iu(d-d')}
 \overline{\widehat\psi(ru)}\widehat\psi(r'u)\,du.
 \label{eq:kernel-flow}
\end{equation}
When differentiating under the integral is justified, its derivative
is obtained by replacing the factor
$e^{-\gamma_0s}S(e^{-s}u)$ in Eq.~\eqref{eq:kernel-flow} with
\begin{equation}
 e^{-\gamma_0s}\left[-\gamma_0 S(e^{-s}u)
             -e^{-s}u S'(e^{-s}u)\right].
 \label{eq:flowderivative}
\end{equation}
For $S(\omega)=C|\omega|^{-\gamma_0}$, $G_s$ is independent of $s$.
This is a dilation fixed-point identity of covariance functions.
Calling it an RG transformation in the statistical-mechanical sense
requires an additional, specified operation that removes or integrates
out degrees of freedom and composes across scales; a variance power law
or Eq.~\eqref{eq:flowderivative} alone is insufficient.

\section{Finite-data posterior in wavelet space}
\label{sec:posterior}
The question studied here is the effect of a fixed finite set of linear
observations on the homogeneous covariance in Eq.~\eqref{eq:jointscaling}.
We first specify the field on which both wavelets and observations act.
The observations remain fixed when the target wavelets are dilated.
Proposition~\ref{prop:posterior-recovery} establishes uniform recovery
of the prior covariance on compact target sets at both endpoints after
posterior normalization.  Theorem~\ref{thm:quantitative-recovery}
quantifies that recovery when finite Fourier orders exist, and
Corollary~\ref{cor:broken-symmetry}
characterizes exactly when the
posterior retains the global prior dilation identity.
We separately test whether the finite observation count is essential
by conditioning on a fixed closed Gaussian observation subspace.

\paragraph{A well-defined homogeneous Gaussian field.}
Fix an integer $M\geq1$, $C>0$, and a real exponent $\gamma<2M+1$.
Let
\begin{equation*}
 \mathcal S_M=\left\{\phi\in\mathcal S(\mathbb R;\mathbb R):
 \int_{\mathbb R}t^k\phi(t)\,dt=0,\quad k=0,\ldots,M-1\right\},
\end{equation*}
where $\mathcal S$ is the Schwartz space, and define
\begin{equation*}
 Q(\phi,\chi)=\frac{C}{2\pi}\int_{\mathbb R}|\omega|^{-\gamma}
       \overline{\widehat\phi(\omega)}\widehat\chi(\omega)\,d\omega.
\end{equation*}
Vanishing moments imply $\widehat\phi(\omega)=O(|\omega|^M)$ at zero.
Thus the integrand defining $Q(\phi,\phi)$ is bounded near zero by
a constant times $|\omega|^{2M-\gamma}$, which is integrable.
Schwartz decay guarantees integrability at infinity for every such
$\gamma$; mixed integrals exist by Cauchy--Schwarz.
Hermitian symmetry of Fourier transforms of real functions makes $Q$
real and symmetric.  Moreover, $Q(\phi,\phi)>0$ for every nonzero
$\phi\in\mathcal S_M$, because the spectral weight is strictly positive
almost everywhere.  All finite Gram matrices are consequently positive
semidefinite, and positive definite for linearly independent test
functions.  The same estimates show continuity in the Schwartz topology.

For an explicit Gaussian construction, complete $\mathcal S_M$ in the
norm $Q(\phi,\phi)^{1/2}$ to a real separable Hilbert space $\mathcal H_Q$.
Separability follows from its isometric embedding into $L^2$ given below.
Choose an orthonormal basis $(e_k)$ and independent standard real
Gaussian variables $(\xi_k)$, and set
\begin{equation*}
 X(\phi)=\sum_k\langle\phi,e_k\rangle_{\mathcal H_Q}\xi_k,
 \qquad \phi\in\mathcal S_M.
\end{equation*}
The series converges in mean square.  This defines a centered, mean-square
linear Gaussian family with covariance $Q$; no pointwise random function
is assumed.  Fix a nonzero real wavelet $\psi\in\mathcal S_M$ and set
$W(a,b)=X(\psi_{a,b})$ with the normalization in Eq.~\eqref{eq:cwt}.
Translations and the dilations
$T_\lambda\phi(t)=\lambda^{-1/2}\phi(t/\lambda)$ preserve $\mathcal S_M$.
Indeed, a moment of a translated, dilated test function is a linear
combination of its moments of order at most $k<M$.
In particular, $T_\lambda\psi_{a,b}=\psi_{\lambda a,\lambda b}$.
Substitution in $Q$ gives Eqs.~\eqref{eq:crossspectral}--\eqref{eq:jointscaling}
with $S(\omega)=C|\omega|^{-\gamma}$, now as well-defined field covariances.
Point evaluations are not part of this construction: the formal variance
of $X(\delta_t)$ would require $\int|\omega|^{-\gamma}\,d\omega$, which
diverges for every $\gamma$.

\paragraph{Fixed finite observations and Gaussian conditioning.}
Let $n\geq1$ be finite, fix $\phi_1,\ldots,\phi_n\in\mathcal S_M$, and take
$y_i=X(\phi_i)+\varepsilon_i$.
The centered Gaussian vector $\bm\varepsilon$ is independent of the
entire field and has a fixed positive-semidefinite covariance matrix $N$.
Assume
\begin{equation*}
 \begin{aligned}
 A=\operatorname{Cov}(\bm y)
   &=\bigl(Q(\phi_i,\phi_j)\bigr)_{ij}+N>0,\\
 q_z&=\bigl(Q(\psi_{a,b},\phi_i)\bigr)_{i=1}^n,
 \quad z=(a,b).
 \end{aligned}
\end{equation*}
Here $A>0$ means positive definite; noiseless observations are allowed
when their Gram matrix is invertible.  Neither $n$, the $\phi_i$, nor $N$
depends on the dilation parameter.  Finite-dimensional Gaussian
conditioning gives
\begin{equation}
 C_W^{\mathrm{post}}(z,z')
   =C_W(z,z')-q_z^{\mathsf T}A^{-1}q_{z'}.
 \label{eq:posterior}
\end{equation}
For example, $W(z)-q_z^{\mathsf T}A^{-1}\bm y$ has zero covariance
with $\bm y$, hence is independent of $\bm y$ by joint Gaussianity;
its covariance is Eq.~\eqref{eq:posterior}.
The positive-semidefinite correction kernel
$\Delta(z,z')=q_z^{\mathsf T}A^{-1}q_{z'}$ has rank at most $n$.
These conditioning and finite-rank statements are established facts,
not new results \cite{rasmussen2006,sottinen2014}.
Independence of non-Gaussian noise alone would not justify this
conditional covariance formula.

\paragraph{Normalized dilations.}
Use the usual complex $L^2(\mathbb R,d\omega)$ inner product
$\langle f,g\rangle_2=\int\overline f g\,d\omega$ and define
\begin{equation}
 J\phi(\omega)=\sqrt{\frac{C}{2\pi}}\,
       |\omega|^{-\gamma/2}\widehat\phi(\omega),
 \qquad D_\lambda h(\omega)=\sqrt\lambda\,h(\lambda\omega).
 \label{eq:spectral-isometry}
\end{equation}
The value at $\omega=0$ is immaterial.
The image of real tests has Hermitian symmetry and is regarded as a
real Hilbert subspace.  On this image,
$Q(\phi,\chi)=\langle J\phi,J\chi\rangle_2$ is real.
The maps $D_\lambda$ are unitary, with inverse $D_{1/\lambda}$, and
\begin{equation}
 JT_\lambda\phi=\lambda^{\gamma/2}D_\lambda J\phi.
 \label{eq:dilation-intertwining}
\end{equation}
Thus the power $\gamma/2$ follows from the original $L^2$ normalization.

\begin{lemma}[Weak convergence of dilations]
\label{lem:weak-dilation}
For every $h\in L^2(\mathbb R)$,
$D_\lambda h\rightharpoonup0$ both as $\lambda\downarrow0$ and as
$\lambda\uparrow\infty$.
\end{lemma}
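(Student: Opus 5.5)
The plan is to use the uniform bound $\|D_\lambda h\|_2=\|h\|_2$ to reduce the claim to a dense subclass. Since the $D_\lambda$ are unitary, it suffices to show that
\[
\langle g,D_\lambda h\rangle_2\to0
\]
for $g,h$ in a dense subset of $L^2(\mathbb R)$. The general case then follows from a standard $\varepsilon/3$ argument: approximate $g$ by $g_0$ and $h$ by $h_0$ in the dense class, and bound
\[
|\langle g,D_\lambda h\rangle_2|\le\|g-g_0\|_2\|h\|_2+\|g_0\|_2\|h-h_0\|_2+|\langle g_0,D_\lambda h_0\rangle_2|,
\]
where the first two terms do not depend on $\lambda$.

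For the dense class, take indicator functions of bounded intervals, or more conveniently bounded functions supported in a compact set $[-R,R]$; their finite linear combinations are dense. For such $g,h$, both bounded by $B$ and supported in $[-R,R]$, the quantity is
\[
\langle g,D_\lambda h\rangle_2=\sqrt\lambda\int\overline{g(\omega)}\,h(\lambda\omega)\,d\omega .
\]
The two endpoints are handled as follows.

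\emph{Case $\lambda\downarrow0$.} Bound the integrand by $B^2\mathbf 1_{[-R,R]}(\omega)$. This gives $|\langle g,D_\lambda h\rangle_2|\le 2RB^2\sqrt\lambda\to0$.

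\emph{Case $\lambda\uparrow\infty$.} Use the symmetry $\langle g,D_\lambda h\rangle_2=\langle D_{1/\lambda}g,h\rangle_2$, which holds because $D_\lambda$ is unitary with inverse $D_{1/\lambda}$. The case $\lambda\uparrow\infty$ is therefore the previous case with the roles of $g$ and $h$ exchanged and $1/\lambda\downarrow0$. Directly, the substitution $u=\lambda\omega$ gives the bound $\lambda^{-1/2}\cdot 2RB^2\to0$.

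The only step needing any care is the density and approximation argument, and it is routine. Weak convergence against a fixed $g$ follows, and since the $L^2$ space here is complex, conjugate-linearity causes no issue.
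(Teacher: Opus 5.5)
Your proof is correct. It shares the paper's scaffolding, namely the uniform bound $\|D_\lambda\|=1$ and a Cauchy--Schwarz reduction to a dense class, but the step on the dense class uses a different mechanism.

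The paper truncates both $h$ and $g$ to an annulus $E_{\delta,R}=\{\delta\leq|\omega|\leq R\}$. Since $D_\lambda h_E$ is supported in $\{\delta/\lambda\leq|\omega|\leq R/\lambda\}$, the truncated pairing is exactly zero once $\lambda<\delta/R$ or $\lambda>R/\delta$. Both endpoints therefore follow at once from disjoint supports. That argument needs the neighborhood of $\omega=0$ removed (for $\lambda\uparrow\infty$) as well as the neighborhood of infinity (for $\lambda\downarrow0$).

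You instead use a size estimate, in effect
\[
|\langle g,D_\lambda h\rangle_2|\leq\sqrt\lambda\,\|g\|_1\|h\|_\infty
\quad\text{and}\quad
|\langle g,D_\lambda h\rangle_2|\leq\lambda^{-1/2}\|g\|_\infty\|h\|_1 .
\]
Small dilations flatten $D_\lambda h$, and large ones concentrate it on a set of vanishing measure. This needs only boundedness and compact support, with no removal of zero frequency, and it gives an explicit $O(\lambda^{\pm1/2})$ rate on the dense class.

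The paper's version is purely qualitative. In exchange, it makes visible the picture of spectral mass escaping past any fixed annulus toward frequency $0$ or $\infty$. The same annular truncation device is reused in the proof of Theorem~\ref{thm:tempered-point-endpoints}.
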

\noindent\textit{Proof.}
For arbitrary $h,g\in L^2$, truncate both to the annulus
$E_{\delta,R}=\{\omega:\delta\leq|\omega|\leq R\}$, where
$0<\delta<R$, obtaining $h_E,g_E$.
The supports of $D_\lambda h_E$ and $g_E$ are disjoint whenever
$\lambda<\delta/R$ or $\lambda>R/\delta$, so their inner product is zero.
For all $\lambda>0$, unitarity and Cauchy--Schwarz give
\begin{equation*}
 |\langle D_\lambda h,g\rangle_2
       -\langle D_\lambda h_E,g_E\rangle_2|
 \leq\|h-h_E\|_2\|g\|_2+\|h_E\|_2\|g-g_E\|_2.
\end{equation*}
The right side tends to zero as $\delta\downarrow0$, $R\uparrow\infty$,
independently of $\lambda$.  Taking either dilation limit first proves
the assertion.\hfill$\square$

\begin{proposition}[Recovery at both dilation limits]
\label{prop:posterior-recovery}
For the homogeneous Gaussian field and fixed finite observations defined
above, write $\lambda z=(\lambda a,\lambda b)$.
For every fixed $z=(a,b)$ and $z'=(a',b')$ with $a,a'>0$,
\begin{equation}
 \begin{aligned}
 \lim_{\lambda\downarrow0}\lambda^{-\gamma}
       C_W^{\mathrm{post}}(\lambda z,\lambda z')&=C_W(z,z'),\\
 \lim_{\lambda\uparrow\infty}\lambda^{-\gamma}
       C_W^{\mathrm{post}}(\lambda z,\lambda z')
 &=C_W(z,z').
 \end{aligned}
 \label{eq:posterior-recovery}
\end{equation}
In particular, the convergence is pointwise in the pair $(z,z')$.
More strongly, for every nonempty compact
$K\subset(0,\infty)\times\mathbb R$,
\begin{equation*}
 E_\lambda(K):=\sup_{z,z'\in K}
 \left|\lambda^{-\gamma}C_W^{\mathrm{post}}(\lambda z,\lambda z')
       -C_W(z,z')\right|,
\end{equation*}
and
\begin{equation*}
 \lim_{\lambda\downarrow0}E_\lambda(K)
 =\lim_{\lambda\uparrow\infty}E_\lambda(K)=0.
\end{equation*}
\end{proposition}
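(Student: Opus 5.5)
The plan is to separate the prior term from the finite-rank correction in Eq.~\eqref{eq:posterior}. By Eq.~\eqref{eq:jointscaling}, $\lambda^{-\gamma}C_W(\lambda z,\lambda z')=C_W(z,z')$ exactly, so
\begin{equation*}
 \lambda^{-\gamma}C_W^{\mathrm{post}}(\lambda z,\lambda z')-C_W(z,z')
 =-\lambda^{-\gamma}q_{\lambda z}^{\mathsf T}A^{-1}q_{\lambda z'} .
\end{equation*}
Since $A$ is fixed and positive definite, $|q^{\mathsf T}A^{-1}q'|\le\|A^{-1}\|\,|q|\,|q'|$. It therefore suffices to show that
\begin{equation*}
 \sup_{z\in K}\lambda^{-\gamma/2}|Q(\psi_{\lambda a,\lambda b},\phi_i)|\to0
\end{equation*}
at both endpoints for each $i$. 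Taking $z=z'$ in the product bound gives the uniform statement, and the pointwise statement is the special case $K=\{z,z'\}$.

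Next I would rewrite the cross-covariance spectrally. Since $\psi_{\lambda a,\lambda b}=T_\lambda\psi_{a,b}$, Eq.~\eqref{eq:dilation-intertwining} gives
\begin{equation*}
 \lambda^{-\gamma/2}Q(\psi_{\lambda a,\lambda b},\phi_i)
 =\langle D_\lambda J\psi_{a,b},J\phi_i\rangle_2 .
\end{equation*}
Here $J\psi_{a,b}$ and $J\phi_i$ lie in $L^2$ because $Q(\phi,\phi)<\infty$ on $\mathcal S_M$. Lemma~\ref{lem:weak-dilation} then gives convergence to $0$ for each fixed $z$, as $\lambda\downarrow0$ and as $\lambda\uparrow\infty$. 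This proves Eq.~\eqref{eq:posterior-recovery}.

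The main step is upgrading this to uniformity over compact $K$. Two routes are available.

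\emph{Equicontinuity route.} I would show that $z\mapsto J\psi_{a,b}$ is continuous from $(0,\infty)\times\mathbb R$ into $L^2$. This holds because
\begin{equation*}
 J\psi_{a,b}(\omega)=\sqrt{C/2\pi}\,|\omega|^{-\gamma/2}\sqrt a\,e^{-i\omega b}\widehat\psi(a\omega),
\end{equation*}
and dominated convergence applies. For $a$ in a compact subinterval of $(0,\infty)$, a dominating function is $c\min(|\omega|^{2M-\gamma},|\omega|^{-2})$, using $|\widehat\psi(u)|\le c\min(|u|^M,|u|^{-2})$ from the moment and Schwartz conditions. So the image of $K$ is a compact set $\mathcal K\subset L^2$. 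Because $D_\lambda$ is unitary, the functions $h\mapsto\langle D_\lambda h,g\rangle$ are $\|g\|_2$-Lipschitz uniformly in $\lambda$. Covering $\mathcal K$ by finitely many $\varepsilon$-balls and applying the pointwise convergence at the centers then gives uniform convergence on $\mathcal K$.

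\emph{Direct route.} Alternatively, I would redo the annulus truncation in the proof of Lemma~\ref{lem:weak-dilation}, bounding $\sup_{z\in K}\|h_z-(h_z)_E\|_2$ by the same dominating function. Compactness supplies the uniform tail estimate near $\omega=0$ and $\omega=\infty$.

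Either route closes the argument once the uniform bound on $|q_{\lambda z}|$ is inserted into the product bound. The only delicate point is the dominated-convergence and compactness check near $\omega=0$, where $\gamma<2M+1$ is exactly what is needed.
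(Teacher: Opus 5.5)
Your proposal is correct and essentially reproduces the paper's proof: the same reduction to $\lambda^{-\gamma}q_{\lambda z}^{\mathsf T}A^{-1}q_{\lambda z'}$ via Eq.~\eqref{eq:jointscaling}, the same rewriting as $\langle D_\lambda h_z,g_i\rangle_2$ with Lemma~\ref{lem:weak-dilation} for the pointwise limits, and the same compactness of $\{h_z:z\in K\}$ plus a finite $\epsilon$-net for uniformity (the paper gets continuity of $z\mapsto h_z$ from strong continuity of dilation and modulation on $L^2$ rather than from dominated convergence). One small fix is needed because $\gamma$ may be any real number below $2M+1$: the decay $|\widehat\psi(u)|\le c|u|^{-2}$ only gives the envelope $|\omega|^{-4-\gamma}$ for $|J\psi_{a,b}|^2$, which is not $O(|\omega|^{-2})$ when $\gamma<-2$ and is not even integrable when $\gamma\le-3$, so you should use Schwartz decay $|\widehat\psi(u)|\le c_N|u|^{-N}$ with $N\ge1-\gamma/2$.
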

\noindent\textit{Proof.}
Put $h_z=J\psi_{a,b}$ and $g_i=J\phi_i$.
Equation~\eqref{eq:dilation-intertwining} gives
\begin{equation*}
 v_\lambda(z):=\lambda^{-\gamma/2}q_{\lambda z},
 \qquad [v_\lambda(z)]_i=\langle D_\lambda h_z,g_i\rangle_2.
\end{equation*}
Each entry tends to zero in both limits by Lemma~\ref{lem:weak-dilation}.
Since $n$ is fixed and finite, $v_\lambda(z)\to0$ in $\mathbb R^n$.
Combining Eq.~\eqref{eq:posterior} with Eq.~\eqref{eq:jointscaling} yields
\begin{equation}
 \lambda^{-\gamma}C_W^{\mathrm{post}}(\lambda z,\lambda z')
 =C_W(z,z')-v_\lambda(z)^{\mathsf T}A^{-1}v_\lambda(z').
 \label{eq:normalized-correction}
\end{equation}
The absolute value of the last term is bounded by
$\|A^{-1}\|\,\|v_\lambda(z)\|\,\|v_\lambda(z')\|$, which tends to zero
at either endpoint.
To obtain uniformity on a fixed compact $K$, note that
$h_{a,b}(\omega)=a^{\gamma/2}e^{-i\omega b}D_a(J\psi)(\omega)$.
Dilation and multiplication by $e^{-i\omega b}$ are strongly continuous
on $L^2$ as $a>0$ and $b\in\mathbb R$ vary (first on smooth compactly
supported functions, then by density and unitarity).
Thus $\{h_z:z\in K\}$ is compact in $L^2$.
For each $i$, let $h_1,\ldots,h_m$ be a finite $\epsilon$-net in this set.
Lemma~\ref{lem:weak-dilation} and $\|D_\lambda\|=1$ give
\begin{equation*}
 \sup_{z\in K}|[v_\lambda(z)]_i|
 \leq\max_{1\leq j\leq m}|\langle D_\lambda h_j,g_i\rangle_2|
       +\epsilon\|g_i\|_2.
\end{equation*}
The maximum tends to zero at both endpoints, and $\epsilon>0$ is
arbitrary.
Since $n$ is finite, $\sup_{z\in K}\|v_\lambda(z)\|\to0$ at both endpoints.
Equation~\eqref{eq:normalized-correction} bounds $E_\lambda(K)$\\
by $\|A^{-1}\|\bigl(\sup_{z\in K}\|v_\lambda(z)\|\bigr)^2$, proving the claim.\hfill$\square$

\paragraph{Conditioning on a closed Gaussian observation subspace.}
Let $\mathcal H$ be the closure of $J\mathcal S_M$ in the real Hilbert
space of Hermitian-symmetric functions in $L^2(\mathbb R,d\omega)$.
In fact, $\mathcal H$ is the entire real Hilbert space of such functions.
To see this, approximate any Hermitian-symmetric $L^2$ function by
smooth Hermitian-symmetric functions $f$ supported on symmetric
compact sets away from zero.  For each such $f$,
$\widehat\phi(\omega)=\sqrt{2\pi/C}\,|\omega|^{\gamma/2}f(\omega)$
is the Fourier transform of a real Schwartz function with all moments
zero, so $\phi\in\mathcal S_M$ and $J\phi=f$.
The Gaussian family $X$ therefore extends by mean-square continuity
to an isonormal family $X_{\mathcal H}(h)$, $h\in\mathcal H$, with
$X_{\mathcal H}(J\phi)=X(\phi)$.

Fix a closed real subspace $E\subseteq\mathcal H$ independent of
$\lambda$, and let $\mathcal F_E$ be the completed sigma-field
generated by $\{X_{\mathcal H}(e):e\in E\}$.  Since $\mathcal H$ is
separable, a countable orthonormal basis of $E$ generates the same
completed sigma-field.  Thus conditioning on all of $E$ has a precise
meaning even when $E$ is infinite-dimensional; it does not posit an
$E$-valued Gaussian random vector with identity covariance.
Write $P_E$ for the orthogonal projection onto $E$, and define
$C_W^E(z,z')=\operatorname{Cov}(W(z),W(z')\mid\mathcal F_E)$.
These are exact, noiseless Gaussian-coordinate observations.  The
finite noisy model in Eq.~\eqref{eq:posterior} remains distinct: its
$A^{-1}$ formula is a finite-dimensional projection in the enlarged
Gaussian space containing both the field and the independent noise.

\begin{proposition}[Endpoint criterion for closed observation spaces]
\label{prop:closed-observation-space}
For the homogeneous field above, let $h_z=J\psi_{a,b}$ and fix a
closed observation subspace $E\subseteq\mathcal H$.  Then
\begin{equation}
 C_W^E(z,z')
 =\langle(I-P_E)h_z,(I-P_E)h_{z'}\rangle_2
 =C_W(z,z')-\langle P_Eh_z,P_Eh_{z'}\rangle_2.
 \label{eq:closed-space-posterior}
\end{equation}
At either endpoint $\lambda\downarrow0$ or
$\lambda\uparrow\infty$, recovery
$\lambda^{-\gamma}C_W^E(\lambda z,\lambda z')\to C_W(z,z')$
for every fixed target pair $z,z'$ holds if and only if
\begin{equation}
 \|P_ED_\lambda h_z\|_2\longrightarrow0
 \quad\text{for every fixed }z=(a,b),\ a>0.
 \label{eq:closed-space-criterion}
\end{equation}
For every nonempty compact $K\subset(0,\infty)\times\mathbb R$,
the exact uniform-error identity is
\begin{equation}
 \sup_{z,z'\in K}\left|
 \lambda^{-\gamma}C_W^E(\lambda z,\lambda z')-C_W(z,z')
 \right|
 =\left(\sup_{z\in K}\|P_ED_\lambda h_z\|_2\right)^2.
 \label{eq:closed-space-uniform-error}
\end{equation}
Hence compact-uniform recovery at either endpoint is equivalent to
$\sup_{z\in K}\|P_ED_\lambda h_z\|_2\to0$ for every such $K$.
In the present wavelet family, pointwise condition
\eqref{eq:closed-space-criterion} already implies this compact-uniform
condition at the same endpoint.
\end{proposition}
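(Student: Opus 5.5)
The plan is to work entirely inside the Gaussian Hilbert space of the isonormal family $X_{\mathcal H}$. First I will identify the conditional covariance. Write $W(z)=X(\psi_{a,b})=X_{\mathcal H}(h_z)$, and split $h_z=P_Eh_z+(I-P_E)h_z$.

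\begin{itemize}
\item $X_{\mathcal H}(P_Eh_z)$ is $\mathcal F_E$-measurable. It is an $L^2$-limit of finite combinations $\sum_{k\le K}\langle h_z,e_k\rangle X_{\mathcal H}(e_k)$ over a countable orthonormal basis $(e_k)$ of $E$, and the sigma-field is completed.
\item The residual $R_z=X_{\mathcal H}((I-P_E)h_z)$ has zero covariance with every $X_{\mathcal H}(e)$, $e\in E$. Since the family is jointly Gaussian, every finite subfamily $(R_z,R_{z'},X_{\mathcal H}(e_1),\dots,X_{\mathcal H}(e_K))$ has $(R_z,R_{z'})$ independent of the $e$-coordinates. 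A $\pi$--$\lambda$ argument then makes $(R_z,R_{z'})$ independent of $\mathcal F_E$.
\end{itemize}

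Hence $\operatorname{Cov}(W(z),W(z')\mid\mathcal F_E)=\mathbb E[R_zR_{z'}]=\langle(I-P_E)h_z,(I-P_E)h_{z'}\rangle_2$ almost surely. Expanding with $P_E$ self-adjoint and idempotent, and using $C_W(z,z')=\langle h_z,h_{z'}\rangle_2$, gives the second form of \eqref{eq:closed-space-posterior}. This independence step is the only measure-theoretic point, and I expect it to be the main obstacle. The rest is Hilbert-space algebra.

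Next, normalize. By \eqref{eq:dilation-intertwining} and $T_\lambda\psi_{a,b}=\psi_{\lambda a,\lambda b}$, we have $h_{\lambda z}=\lambda^{\gamma/2}D_\lambda h_z$. Combined with \eqref{eq:jointscaling} this gives
\begin{equation*}
 \lambda^{-\gamma}C_W^E(\lambda z,\lambda z')-C_W(z,z')
 =-\langle P_ED_\lambda h_z,P_ED_\lambda h_{z'}\rangle_2 .
\end{equation*}
Write $u_\lambda(z)=P_ED_\lambda h_z$. The pointwise criterion then follows in both directions.
\begin{itemize}
\item If \eqref{eq:closed-space-criterion} holds, Cauchy--Schwarz bounds the error by $\|u_\lambda(z)\|\,\|u_\lambda(z')\|\to0$.
\item Conversely, recovery at the diagonal pair $z'=z$ says exactly that $\|u_\lambda(z)\|_2^2\to0$.
\end{itemize}
For \eqref{eq:closed-space-uniform-error}, Cauchy--Schwarz gives that the supremum over pairs is at most $(\sup_K\|u_\lambda\|)^2$. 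Restricting to diagonal pairs $z=z'\in K$ gives the reverse inequality, so equality holds. The stated equivalence for compact-uniform recovery is then immediate.

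Finally, I will show that the pointwise condition implies the compact-uniform one. I reuse the compactness argument from the proof of Proposition~\ref{prop:posterior-recovery}: $h_{a,b}=a^{\gamma/2}e^{-i\omega b}D_a(J\psi)$ depends continuously on $(a,b)$ in $L^2$, so $\{h_z:z\in K\}$ is compact. Choose an $\epsilon$-net $h_{z_1},\dots,h_{z_m}$ with $z_j\in K$. Since $\|P_ED_\lambda\|\le1$ for all $\lambda$, the map $h\mapsto\|P_ED_\lambda h\|$ is $1$-Lipschitz uniformly in $\lambda$. This gives
\begin{equation*}
 \sup_{z\in K}\|u_\lambda(z)\|\le\max_j\|u_\lambda(z_j)\|+\epsilon .
\end{equation*}
The maximum over the finitely many net points tends to zero at the given endpoint by the pointwise hypothesis. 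Since $\epsilon>0$ is arbitrary, the supremum tends to zero.
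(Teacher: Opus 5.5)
Your proof is correct and follows the paper's argument essentially step for step. The steps are the same: the orthogonal splitting $h_z=P_Eh_z+(I-P_E)h_z$ with the residual independent of $\mathcal F_E$; the intertwining identity reducing the normalized error to $-\langle P_ED_\lambda h_z,P_ED_\lambda h_{z'}\rangle_2$; the diagonal for necessity and for the lower bound in the uniform identity; Cauchy--Schwarz for sufficiency and the upper bound; and an $\epsilon$-net on the compact set $\{h_z:z\in K\}$ using $\|P_ED_\lambda\|\le1$. Your conditioning step is, if anything, slightly more explicit than the paper's, since you prove joint independence of the pair $(R_z,R_{z'})$ from $\mathcal F_E$ and the $\mathcal F_E$-measurability of $X_{\mathcal H}(P_Eh_z)$ via $L^2$ limits.
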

\noindent\textit{Proof.}
Orthogonality makes
$X_{\mathcal H}((I-P_E)h_z)$ independent of every finite collection of
Gaussian coordinates in $E$, hence independent of $\mathcal F_E$.
Thus the conditional mean is $X_{\mathcal H}(P_Eh_z)$ and the
conditional covariance is the deterministic kernel
\eqref{eq:closed-space-posterior}.
Using Eq.~\eqref{eq:dilation-intertwining},
\begin{equation*}
 \lambda^{-\gamma}C_W^E(\lambda z,\lambda z')
 =C_W(z,z')-
 \langle P_ED_\lambda h_z,P_ED_\lambda h_{z'}\rangle_2.
\end{equation*}
The diagonal correction is $\|P_ED_\lambda h_z\|_2^2$;
this proves necessity of Eq.~\eqref{eq:closed-space-criterion}.
Cauchy--Schwarz proves sufficiency and, using the diagonal again,
gives the equality in Eq.~\eqref{eq:closed-space-uniform-error}.
Finally, $\{h_z:z\in K\}$ is compact by the continuity proved in
Proposition~\ref{prop:posterior-recovery}, and
$\|P_ED_\lambda\|\leq1$.  A finite $\epsilon$-net therefore
upgrades pointwise convergence of $P_ED_\lambda h_z$ to uniform
convergence for $z\in K$.\hfill$\square$

\paragraph{Infinite-dimensional frequency observations.}
For a symmetric measurable frequency set $B$, let $E_B$ consist of
all $h\in\mathcal H$ that vanish almost everywhere outside $B$.
Then $E_B$ is closed and $P_{E_B}h=\mathbf1_Bh$; the following two
subspaces are both infinite-dimensional.  For the finite band
$B_{\mathrm{band}}=\{\omega:1\leq|\omega|\leq2\}$, substitution
$u=\lambda\omega$ gives
\begin{equation*}
 \|P_{E_{B_{\mathrm{band}}}}D_\lambda h_z\|_2^2
 =\int_{\lambda\leq|u|\leq2\lambda}|h_z(u)|^2\,du
 \longrightarrow0
 \quad(\lambda\downarrow0\ \text{or}\ \lambda\uparrow\infty).
\end{equation*}
Indeed, the integration annulus contracts to zero or escapes to
infinity, and $|h_z|^2$ is integrable.  Proposition~
\ref{prop:closed-observation-space} therefore gives recovery at both
endpoints, uniformly on every fixed compact target set.

In contrast, for $B_{\mathrm{high}}=\{\omega:|\omega|\geq1\}$,
\begin{equation*}
 \|P_{E_{B_{\mathrm{high}}}}D_\lambda h_z\|_2^2
 =\int_{|u|\geq\lambda}|h_z(u)|^2\,du
 \longrightarrow
 \begin{cases}
 \|h_z\|_2^2=C_W(z,z)>0,&\lambda\downarrow0,\\
 0,&\lambda\uparrow\infty.
 \end{cases}
\end{equation*}
At the small-scale endpoint, the unobserved component has squared
norm $\int_{|u|<\lambda}|h_z(u)|^2\,du\to0$.  Applying
Cauchy--Schwarz to Eq.~\eqref{eq:closed-space-posterior} yields
\begin{equation*}
 \lim_{\lambda\downarrow0}\lambda^{-\gamma}
 C_W^{E_{B_{\mathrm{high}}}}(\lambda z,\lambda z')=0,
 \qquad
 \lim_{\lambda\uparrow\infty}\lambda^{-\gamma}
 C_W^{E_{B_{\mathrm{high}}}}(\lambda z,\lambda z')=C_W(z,z').
\end{equation*}
Both limits are uniform on fixed compact target sets by the same
finite-net argument.  Since the prior variance is strictly positive,
small-scale recovery fails.  Thus finite observation count is
sufficient for two-endpoint recovery but is not necessary; a fixed
closed infinite-dimensional observation space can preserve both
endpoints or eliminate one of them.

The quantitative result below uses the Fourier counterpart of the
established moment-expansion method.  Pathak and Pathak
\cite[Theorems~3.1 and~3.3]{pathak2014} give large- and small-scale
wavelet-transform expansions at fixed translation, respectively.
Here the homogeneous spectral weight and simultaneous target dilation
enter the cross-covariance coefficients, which are then propagated
through the quadratic posterior correction in
Eq.~\eqref{eq:normalized-correction}.  The resulting coefficients,
recovery orders, and cancellation bounds hold uniformly on every fixed
compact target set; the Taylor expansion itself is not claimed as new.

\begin{theorem}[Quantitative two-endpoint recovery]
\label{thm:quantitative-recovery}
In addition to the assumptions of Proposition~\ref{prop:posterior-recovery},
suppose $\widehat\psi$ first has a nonzero Taylor coefficient at the
finite order $m_\psi$, and every nonzero $\phi_i$ has one at the finite
order $m_i$.  Let $I=\{i:\phi_i\ne0\}$; if $I$ is empty, the posterior
correction is identically zero.  Otherwise put
\begin{equation*}
 \begin{aligned}
 c_\psi&=\frac{\widehat\psi^{(m_\psi)}(0)}{m_\psi!},\qquad
 &c_i&=\frac{\widehat\phi_i^{(m_i)}(0)}{m_i!},\\
 \alpha&=m_\psi+\frac{1-\gamma}{2},
 &\beta_i&=m_i+\frac{1-\gamma}{2},\\
 m_*&=\min_{i\in I}m_i,
 &\beta_*&=m_*+\frac{1-\gamma}{2}.
 \end{aligned}
\end{equation*}
For $i\in I$ and $z=(a,b)$, define the real coefficients
\begin{align}
 S_i(z)&=\frac{C a^{m_\psi+1/2}\overline{c_\psi}}{2\pi}
  \int_{\mathbb R}|\omega|^{-\gamma}\omega^{m_\psi}
      \widehat\phi_i(\omega)\,d\omega,\label{eq:small-cross-coefficient}\\
 L_i(z)&=\frac{C\sqrt a\,c_i}{2\pi}
  \int_{\mathbb R}|u|^{-\gamma}u^{m_i}e^{ibu}
      \overline{\widehat\psi(au)}\,du.
      \label{eq:large-cross-coefficient}
\end{align}
Set $S_i=L_i=0$ for $i\notin I$, write $S(z)=(S_i(z))_i$, and let
$L_*(z)$ have entry $L_i(z)$ if $i\in I$ and $m_i=m_*$, and zero
otherwise.  Then for every fixed nonempty compact
$K\subset(0,\infty)\times\mathbb R$, uniformly in $z\in K$,
\begin{equation}
 [v_\lambda(z)]_i=
 \begin{cases}
 \lambda^\alpha\bigl(S_i(z)+O_K(\lambda)\bigr),
       &\lambda\downarrow0,\quad i\in I,\\
 \lambda^{-\beta_i}\bigl(L_i(z)+O_K(\lambda^{-1})\bigr),
       &\lambda\uparrow\infty,\quad i\in I.
 \end{cases}
 \label{eq:cross-asymptotics}
\end{equation}
The entries with $i\notin I$ are exactly zero.  Consequently, uniformly
for $z,z'\in K$,
\begin{align}
 \lambda^{-\gamma}C_W^{\mathrm{post}}(\lambda z,\lambda z')-C_W(z,z')
 &=-\lambda^{2\alpha}S(z)^{\mathsf T}A^{-1}S(z')
     +O_K(\lambda^{2\alpha+1}),&&\lambda\downarrow0,
     \label{eq:small-posterior-rate}\\
 &=-\lambda^{-2\beta_*}L_*(z)^{\mathsf T}A^{-1}L_*(z')
     +O_K(\lambda^{-2\beta_*-1}),&&\lambda\uparrow\infty.
     \label{eq:large-posterior-rate}
\end{align}
In particular, for the $E_\lambda(K)$ of
Proposition~\ref{prop:posterior-recovery}, set
\begin{equation*}
 B_0(K)=\max_{z\in K}S(z)^{\mathsf T}A^{-1}S(z),\qquad
 B_\infty(K)=\max_{z\in K}L_*(z)^{\mathsf T}A^{-1}L_*(z).
\end{equation*}
Then
\begin{equation}
 \begin{aligned}
 E_\lambda(K)&=\lambda^{2m_\psi+1-\gamma}
      \bigl(B_0(K)+O_K(\lambda)\bigr)\quad(\lambda\downarrow0),\\
 E_\lambda(K)&=\lambda^{-(2m_*+1-\gamma)}
      \bigl(B_\infty(K)+O_K(\lambda^{-1})\bigr)
      \quad(\lambda\uparrow\infty).
 \end{aligned}
 \label{eq:uniform-posterior-rate}
\end{equation}
Both exponents are positive.  If the relevant $B$ is positive, the
displayed exponent is the exact leading recovery exponent on $K$.
If $B_0(K)=0$, the first bound improves to
$E_\lambda(K)=O_K(\lambda^{2\alpha+2})$; if $B_\infty(K)=0$, the second
improves to $E_\lambda(K)=O_K(\lambda^{-2\beta_*-2})$.
\end{theorem}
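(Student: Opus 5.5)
The plan is to derive the entrywise expansions \eqref{eq:cross-asymptotics} from an explicit spectral formula for $v_\lambda(z)$. The posterior statements then follow by algebra on the finite quadratic form in Eq.~\eqref{eq:normalized-correction}.

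\textbf{Spectral formula and the small-scale endpoint.} By Eq.~\eqref{eq:fourier} and the definition of $Q$,
\begin{equation*}
 [v_\lambda(z)]_i=\lambda^{-\gamma/2}\frac{C\sqrt{\lambda a}}{2\pi}
 \int_{\mathbb R}|\omega|^{-\gamma}e^{i\omega\lambda b}
 \overline{\widehat\psi(\lambda a\omega)}\widehat\phi_i(\omega)\,d\omega .
\end{equation*}
For $\lambda\downarrow0$ I would use a global Taylor bound. Since $\widehat\psi$ is Schwartz with first nonzero coefficient at order $m_\psi$,
\begin{equation*}
 \bigl|\widehat\psi(u)-c_\psi u^{m_\psi}\bigr|\le C_1|u|^{m_\psi+1}
 \quad\text{for all } u\in\mathbb R .
\end{equation*}
This holds near $0$ by Taylor's theorem, and for $|u|\ge1$ because both $\widehat\psi$ and $|u|^{m_\psi}$ are dominated by $|u|^{m_\psi+1}$. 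Likewise $|e^{i\omega\lambda b}-1|\le\lambda|b||\omega|$.

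Inserting these bounds produces the main term $\lambda^{\alpha}S_i(z)$. The error is at most $\lambda^{\alpha+1}$ times a constant times
\begin{equation*}
 \int|\omega|^{m_\psi+1-\gamma}\,|\widehat\phi_i(\omega)|\,d\omega .
\end{equation*}
This integral is finite. At infinity the Schwartz decay of $\widehat\phi_i$ suffices. At zero, $|\widehat\phi_i(\omega)|=O(|\omega|^{m_i})$ with $m_i\ge M$ and $m_\psi\ge M$, and $2M+1-\gamma>0$, so the integrand is $O(|\omega|^{m_\psi+m_i+1-\gamma})$ with exponent greater than $-1$. The constants depend only on $\sup_K a$ and $\sup_K|b|$, which gives uniformity on $K$. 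Realness of $S_i$ follows from the Hermitian symmetry of $\widehat\phi_i$ and the fact that $i^{m_\psi}$ pairs with the parity of $\omega^{m_\psi}$.

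\textbf{Large-scale endpoint.} For $\lambda\uparrow\infty$ I substitute $u=\lambda\omega$, which turns the prefactor into $\lambda^{(\gamma-1)/2}$. I then expand $\widehat\phi_i(u/\lambda)$ by the analogous global bound, with error $\le C_2|u/\lambda|^{m_i+1}$. This yields $\lambda^{-\beta_i}L_i(z)$ plus an error of size $\lambda^{-\beta_i-1}$ times
\begin{equation*}
 \int|u|^{m_i+1-\gamma}\,|\widehat\psi(au)|\,du .
\end{equation*}
This integral is finite near zero because $|\widehat\psi(au)|=O(|u|^{m_\psi})$, and at infinity by Schwartz decay. It is uniform for $a$ in the compact projection of $K$, which is bounded away from $0$ and $\infty$. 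Entries with $\phi_i=0$ vanish identically.

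\textbf{Posterior rates and the exact error.} Next I insert the expansions into $v_\lambda(z)^{\mathsf T}A^{-1}v_\lambda(z')$. At small scales, $v_\lambda=\lambda^\alpha(S+O_K(\lambda))$, which gives \eqref{eq:small-posterior-rate}; the cross terms are $O_K(\lambda^{2\alpha+1})$ because $S$ is bounded on $K$, being continuous in $z$. At large scales, entries with $m_i>m_*$ are $O(\lambda^{-\beta_*-1})$, so $v_\lambda=\lambda^{-\beta_*}(L_*+O_K(\lambda^{-1}))$, which gives \eqref{eq:large-posterior-rate}.

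For $E_\lambda(K)$, positive definiteness of $A^{-1}$ and Cauchy--Schwarz in the $A^{-1}$ inner product show that the supremum of $|x^{\mathsf T}A^{-1}y|$ over $K\times K$ equals the diagonal supremum. Continuity of $S$ and $L_*$ in $z$ shows this supremum is a maximum, which yields \eqref{eq:uniform-posterior-rate}. Both exponents are positive because $2m+1-\gamma\ge2M+1-\gamma>0$.

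If $B_0(K)=0$, then $S\equiv0$ on $K$, since $A^{-1}>0$. Hence $v_\lambda=O_K(\lambda^{\alpha+1})$, and the correction is $O_K(\lambda^{2\alpha+2})$. The case $B_\infty(K)=0$ is symmetric.

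The step I expect to be the main obstacle is making the remainder estimates uniform on $K$ and global in frequency, rather than only local near $0$. That is why I will use the global Taylor bounds combined with the integrability bookkeeping that relies on $\gamma<2M+1$.
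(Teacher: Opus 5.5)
Your proposal is correct and follows essentially the same route as the paper. Both use the $\omega$-representation with Taylor expansion of $e^{i\lambda b\omega}\overline{\widehat\psi(\lambda a\omega)}$ as $\lambda\downarrow0$, and the $u=\lambda\omega$ representation with expansion of $\widehat\phi_i(u/\lambda)$ as $\lambda\uparrow\infty$, with remainders $O(\lambda^{m_\psi+1}|\omega|^{m_\psi+1})$ and $O(\lambda^{-m_i-1}|u|^{m_i+1})$ controlled uniformly on $K$; these are then substituted into the quadratic correction, and $E_\lambda(K)$ is reduced to its diagonal by Cauchy--Schwarz in the $A^{-1}$ inner product. Your global Taylor bound and your separate treatment of $e^{i\lambda b\omega}-1$ simply spell out the uniformity that the paper asserts more tersely.
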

\noindent\textit{Proof.}
The definition of $Q$ and the Fourier transform in Eq.~\eqref{eq:fourier}
give two equivalent representations:
\begin{equation*}
 \begin{aligned}
 [v_\lambda(z)]_i
 &=\frac{C\sqrt a}{2\pi}\lambda^{(1-\gamma)/2}
   \int_{\mathbb R}|\omega|^{-\gamma}e^{i\lambda b\omega}
   \overline{\widehat\psi(\lambda a\omega)}
   \widehat\phi_i(\omega)\,d\omega\\
 &=\frac{C\sqrt a}{2\pi}\lambda^{(\gamma-1)/2}
   \int_{\mathbb R}|u|^{-\gamma}e^{ibu}
   \overline{\widehat\psi(au)}\widehat\phi_i(u/\lambda)\,du.
 \end{aligned}
\end{equation*}
Since $m_\psi,m_i\geq M$ and $\gamma<2M+1$,
$m_\psi+m_i-\gamma>-1$; the displayed coefficient integrals
therefore converge at zero, and Schwartz decay controls infinity.
They are real by Hermitian symmetry.  Taylor expansion of
$e^{i\lambda b\omega}\overline{\widehat\psi(\lambda a\omega)}$
in the first representation and of $\widehat\phi_i(u/\lambda)$
in the second gives Eq.~\eqref{eq:cross-asymptotics}.
For $z$ in a fixed compact $K$, Taylor remainders are bounded by
constants times $\lambda^{m_\psi+1}|\omega|^{m_\psi+1}$ and
$\lambda^{-m_i-1}|u|^{m_i+1}$, respectively.  The resulting weighted
integrals are finite, uniformly in $z\in K$, so both stated error
bounds are uniform.  Substituting these expansions into
Eq.~\eqref{eq:normalized-correction} proves
Eqs.~\eqref{eq:small-posterior-rate} and
\eqref{eq:large-posterior-rate}; entries with $m_i>m_*$ have at least
one additional inverse power of $\lambda$.
Positive definiteness of $A^{-1}$ and Cauchy--Schwarz show that
$E_\lambda(K)=\max_{z\in K}v_\lambda(z)^{\mathsf T}A^{-1}v_\lambda(z)$:
the cross-pair supremum is attained on the diagonal.  This identity
and the uniform expansions give Eq.~\eqref{eq:uniform-posterior-rate}.
If $B_0(K)=0$, positive definiteness gives $S(z)=0$ throughout $K$;
thus $v_\lambda(z)=O_K(\lambda^{\alpha+1})$.
If $B_\infty(K)=0$, $L_*(z)=0$ throughout $K$ and every component of
$v_\lambda(z)$ is $O_K(\lambda^{-\beta_*-1})$.
Squaring these bounds proves the two improvements.\hfill$\square$

The theorem therefore quantifies the fixed-observation posterior error,
not just an individual wavelet transform: its exact compact-set error
coefficients are $B_0(K)$ and $B_\infty(K)$.
The leading coefficients may vanish.  For example, opposite parity of
$\psi$ and $\phi_i$ about the origin makes the weighted integral in
Eq.~\eqref{eq:small-cross-coefficient} zero and makes
$L_i(a,0)=0$; weighted orthogonality can also annul the small-scale
integral without parity.  For distinct targets, even nonzero $L_*$
vectors can have $L_*(z)^{\mathsf T}A^{-1}L_*(z')=0$; on the diagonal
the coefficient is positive whenever its vector is nonzero.
Here $S(z)=a^{m_\psi+1/2}h$ for a constant vector $h$, so if $h\ne0$
its cross-pair coefficient is positive for every $z,z'$.  At large
scales $L_*(z)$ can depend on both $a$ and $b$.
If a displayed coefficient vanishes, its candidate power is only an
upper bound; higher Taylor coefficients, where nonzero, determine the
first actual power.  The original endpoint limits remain valid when a
nonzero Schwartz function is Fourier-flat at zero, even though its
finite first nonzero order required in this proposition does not exist.

For clarity, the later candidate coefficients are also explicit.  Write
$c_{\psi,j}=\widehat\psi^{(j)}(0)/j!$ and
$c_{i,r}=\widehat\phi_i^{(r)}(0)/r!$.  For $r\geq m_\psi$ and
$s\geq m_i$, put
\begin{align*}
 S_{i,r}(a,b)&=\frac{C\sqrt a}{2\pi}
   \left(\int_{\mathbb R}|\omega|^{-\gamma}\omega^r
      \widehat\phi_i(\omega)\,d\omega\right)
   \sum_{j=m_\psi}^{r}a^j\overline{c_{\psi,j}}
      \frac{(ib)^{r-j}}{(r-j)!},\\
 L_{i,s}(a,b)&=\frac{C\sqrt a\,c_{i,s}}{2\pi}
   \int_{\mathbb R}|u|^{-\gamma}u^s e^{ibu}
      \overline{\widehat\psi(au)}\,du.
\end{align*}
For any finite $R\geq m_\psi$ and $R'\geq m_i$, the same proof yields,
uniformly on fixed compact target sets,
\begin{align*}
 [v_\lambda(z)]_i
 &=\sum_{r=m_\psi}^{R}
     \lambda^{r+(1-\gamma)/2}S_{i,r}(z)
     +O_K\bigl(\lambda^{R+1+(1-\gamma)/2}\bigr)
       &&(\lambda\downarrow0),\\
 [v_\lambda(z)]_i
 &=\sum_{s=m_i}^{R'}
     \lambda^{-[s+(1-\gamma)/2]}L_{i,s}(z)
     +O_K\bigl(\lambda^{-[R'+1+(1-\gamma)/2]}\bigr)
       &&(\lambda\uparrow\infty).
\end{align*}
Thus a later nonzero coefficient can identify a faster actual rate;
if all coefficients at a fixed target vanish, its cross-covariance
decays faster than every algebraic power at that endpoint.

\begin{corollary}[Exact dilation symmetry]
\label{cor:broken-symmetry}
Under the assumptions of Proposition~\ref{prop:posterior-recovery}, the identity
\begin{equation*}
 C_W^{\mathrm{post}}(\lambda z,\lambda z')
   =\lambda^\gamma C_W^{\mathrm{post}}(z,z')
\end{equation*}
holds for all $\lambda>0$ and all target pairs $z,z'$ if and only if
the observation correction vanishes identically on the wavelet family:
\begin{equation*}
 \Delta(z,z')=q_z^{\mathsf T}A^{-1}q_{z'}=0
 \qquad\text{for all }z,z'.
\end{equation*}
Equivalently, $q_z=0$ for every $z$.
For the nonzero real Schwartz wavelet fixed above, this occurs if and
only if every observation test function $\phi_i$ is zero.
In particular, if any $\phi_i$ is nonzero, the global dilation identity
cannot hold.
\end{corollary}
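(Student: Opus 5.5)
The argument runs through three equivalences: the dilation identity $\Leftrightarrow$ $\Delta\equiv0$, then $\Delta\equiv0\Leftrightarrow q_z\equiv0$, then $q_z\equiv0\Leftrightarrow$ all $\phi_i=0$.

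\emph{Identity $\Leftrightarrow$ $\Delta\equiv0$.} If $\Delta\equiv0$, the posterior equals the prior, and the identity is Eq.~\eqref{eq:jointscaling}. Conversely, suppose the identity holds for all $\lambda$. Then for fixed $z,z'$ we have
\[
\lambda^{-\gamma}C_W^{\mathrm{post}}(\lambda z,\lambda z')=C_W^{\mathrm{post}}(z,z')
\]
for every $\lambda>0$. Letting $\lambda\downarrow0$ and applying Proposition~\ref{prop:posterior-recovery}, the left side tends to $C_W(z,z')$. Hence $C_W^{\mathrm{post}}(z,z')=C_W(z,z')$, that is, $\Delta(z,z')=0$.

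\emph{$\Delta\equiv0\Leftrightarrow q_z\equiv0$.} On the diagonal, $\Delta(z,z)=q_z^{\mathsf T}A^{-1}q_z$. Since $A^{-1}$ is positive definite, $\Delta(z,z)=0$ forces $q_z=0$. The converse direction is immediate.

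\emph{$q_z\equiv0\Leftrightarrow$ all $\phi_i=0$.} One direction is trivial. For the other, assume $\phi_i\ne0$ but $Q(\psi_{a,b},\phi_i)=0$ for all $a>0$ and $b\in\mathbb R$. By Eq.~\eqref{eq:crossspectral}, this means
\[
\int_{\mathbb R}|\omega|^{-\gamma}e^{i\omega b}\,\overline{\widehat\psi(a\omega)}\,\widehat\phi_i(\omega)\,d\omega=0\qquad\text{for all }b.
\]
For fixed $a$, the integrand $g_a(\omega)=|\omega|^{-\gamma}\overline{\widehat\psi(a\omega)}\widehat\phi_i(\omega)$ lies in $L^1$ by the integrability estimates for $Q$: near zero it is $O(|\omega|^{2M-\gamma})$, and at infinity it has Schwartz decay. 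Its inverse Fourier transform therefore vanishes identically, and Fourier uniqueness on $L^1$ gives $g_a=0$ a.e. Since $|\omega|^{-\gamma}>0$ for $\omega\ne0$, we get $\overline{\widehat\psi(a\omega)}\,\widehat\phi_i(\omega)=0$ for a.e.\ $\omega$, for every $a>0$.

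\emph{Main step: nonvanishing at every frequency.} Since $\widehat\phi_i$ is continuous and not identically zero, it is nonzero on some open interval $U$; by Hermitian symmetry we may take $U$ to avoid $0$, with its mirror image $-U$ as well. Likewise, $\widehat\psi$ is nonzero at some point $u_0\ne0$. Pick $\omega_0\in U$ with the same sign as $u_0$, and set $a=u_0/\omega_0>0$. Then $\widehat\psi(a\omega)\ne0$ on a neighborhood of $\omega_0$ by continuity. On the intersection of that neighborhood with $U$, a set of positive measure, the product $\overline{\widehat\psi(a\omega)}\,\widehat\phi_i(\omega)$ is nonzero. This contradicts the previous step.

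The only subtlety is the sign matching between $u_0$ and $\omega_0$, since $a$ must be positive. Hermitian symmetry handles it, because $|\widehat f(-\omega)|=|\widehat f(\omega)|$ for real $f$. The final sentence of the corollary then follows directly: if any $\phi_i\ne0$, the chain of equivalences shows the global dilation identity cannot hold.
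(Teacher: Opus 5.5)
Your proof is correct and follows essentially the same route as the paper. The endpoint limit of Proposition~\ref{prop:posterior-recovery} forces $\Delta\equiv0$, positive definiteness of $A^{-1}$ reduces this to $q_z\equiv0$, and Fourier uniqueness on $L^1$ for $|\omega|^{-\gamma}\overline{\widehat\psi(a\omega)}\widehat\phi_i(\omega)$ then excludes $q_z\equiv0$ when some $\phi_i\ne0$; your sign-matching choice $a=u_0/\omega_0>0$ is the paper's $a=p/q$ with $p,q>0$ obtained from Hermitian symmetry, and arguing by contradiction rather than directly is only cosmetic.
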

\noindent\textit{Proof.}
If the identity holds globally, then for each fixed target pair
$\lambda^{-\gamma}C_W^{\mathrm{post}}(\lambda z,\lambda z')$
equals $C_W^{\mathrm{post}}(z,z')$ for every $\lambda$.
Proposition~\ref{prop:posterior-recovery} identifies either endpoint limit
with $C_W(z,z')$, so Eq.~\eqref{eq:posterior} gives $\Delta(z,z')=0$
for every pair.
Conversely, if the correction vanishes identically, the posterior
covariance equals the prior covariance and obeys Eq.~\eqref{eq:jointscaling}.
Finally, positive definiteness of $A^{-1}$ implies that
$\Delta(z,z)=0$ if and only if $q_z=0$.
Thus zero correction on the entire family is equivalent to $q_z=0$
for every $z$, and any positive variance reduction precludes the
global identity.
To show that a nonzero observation function necessarily gives such a
reduction, fix $i$ with $\phi_i\ne0$.
The Fourier transforms of the nonzero real Schwartz functions $\psi$
and $\phi_i$ are continuous and have Hermitian symmetry; hence there
are $p,q>0$ with $\widehat\psi(p)\ne0$ and $\widehat\phi_i(q)\ne0$.
Choose $a=p/q$.  The function
$f_a(\omega)=|\omega|^{-\gamma}
\overline{\widehat\psi(a\omega)}\widehat\phi_i(\omega)$
is nonzero on an interval around $q$, and is in $L^1(\mathbb R)$
by the finiteness of $Q$ and Cauchy--Schwarz.
By Eq.~\eqref{eq:fourier} and the definition of $Q$,
\begin{equation*}
 Q(\psi_{a,b},\phi_i)
 =\frac{C\sqrt a}{2\pi}\int_{\mathbb R}
      e^{i\omega b}f_a(\omega)\,d\omega.
\end{equation*}
Uniqueness of the Fourier transform on $L^1$ implies that this quantity
cannot vanish for every $b$.  For some $z=(a,b)$, therefore, $q_z\ne0$
and $\Delta(z,z)>0$ because $A^{-1}>0$.
Conversely, if every $\phi_i=0$, then $q_z=0$ for every $z$.
This completes the characterization.\hfill$\square$

The corollary concerns a global identity; it does not rule out equality
at particular scale pairs.

\paragraph{Scope of the limits.}
The observations are fixed while both target scales and target positions
are dilated about the origin.  The result also gives entrywise convergence
for any fixed finite collection of targets and uniform convergence when
the targets range over a fixed compact subset of
$(0,\infty)\times\mathbb R$.  It asserts neither uniform convergence
over all targets nor convergence in a covariance-operator norm, and
the rates in Theorem~\ref{thm:quantitative-recovery} require its
additional finite Fourier-order assumptions.
Simultaneously dilating the observation functions or allowing their
number to grow with $\lambda$ are different problems.  Proposition~
\ref{prop:closed-observation-space} treats fixed infinite-dimensional
subspaces through their exact Gaussian coordinates; an unspecified
infinite collection of noisy data is not covered by that statement.
The proofs use the specific homogeneous spectral Hilbert space and do
not extend by assertion to arbitrary Gaussian processes, including
models with additional finite-dimensional self-similar components.

\paragraph{A finite-variance Mat\'ern-type process with point data.}
There is a distinct two-endpoint result for ordinary processes that
avoids the inadmissibility of point evaluations under a pure power
spectrum.  Fix the real nonzero $\psi\in\mathcal S_M$ above, $M\geq1$,
and consider a centered stationary Gaussian process with
\begin{equation}
 S_{p,\kappa}(\omega)=C(\kappa^2+\omega^2)^{-p},\qquad
 C,\kappa>0,\quad \frac12<p<M+\frac12.
 \label{eq:tempered-spectrum}
\end{equation}
In one dimension this is a Mat\'ern spectral form with smoothness
$\nu=p-1/2$ after matching the length and amplitude parameters
\cite[Eq.~(4.15)]{rasmussen2006}.  Here the Fourier variable is angular
frequency, as in Eq.~\eqref{eq:fourier}.  Define the process covariance
$k(s)=(2\pi)^{-1}\int e^{i\omega s}S_{p,\kappa}(\omega)\,d\omega$.
At fixed locations $t_1,\ldots,t_n$, observe
$y_i=x(t_i)+\varepsilon_i$, with centered Gaussian noise independent
of the process, fixed covariance $N\geq0$, and
$A=(k(t_i-t_j))_{ij}+N>0$.
Neither the locations, the noise covariance, nor $n$ changes with
$\lambda$.  Write $q_z=(\operatorname{Cov}(W(z),x(t_i)))_i$.

\begin{theorem}[Distinct posterior endpoints with fixed point data]
\label{thm:tempered-point-endpoints}
\leavevmode\\
Under these assumptions, let $z=(a,b)$ and $z'=(a',b')$, with
$a,a'>0$.  Define the two prior wavelet kernels
\begin{align}
 H_{2p}(z,z')&=\frac{C\sqrt{aa'}}{2\pi}
  \int_{\mathbb R}|u|^{-2p}e^{iu(b-b')}
  \overline{\widehat\psi(au)}\widehat\psi(a'u)\,du,
  \label{eq:tempered-high-limit}\\
 H_0(z,z')&=S_{p,\kappa}(0)
   \int_{\mathbb R}\psi_{a,b}(t)\psi_{a',b'}(t)\,dt,
   \qquad S_{p,\kappa}(0)=C\kappa^{-2p}.
   \label{eq:tempered-low-limit}
\end{align}
For every nonempty compact $K\subset(0,\infty)\times\mathbb R$,
\begin{equation}
 \begin{aligned}
 \lim_{\lambda\downarrow0}\sup_{z,z'\in K}
  \left|\lambda^{-2p}C_W^{\mathrm{post}}(\lambda z,\lambda z')
       -H_{2p}(z,z')\right|&=0,\\
 \lim_{\lambda\uparrow\infty}\sup_{z,z'\in K}
  \left|C_W^{\mathrm{post}}(\lambda z,\lambda z')
       -H_0(z,z')\right|&=0.
 \end{aligned}
 \label{eq:tempered-posterior-limits}
\end{equation}
In particular, for each $i$ the cross-covariances satisfy
\begin{equation}
 \sup_{z\in K}\left|\lambda^{-p}
       \operatorname{Cov}(W(\lambda z),x(t_i))\right|\to0
       \quad(\lambda\downarrow0),\qquad
 \sup_{z\in K}\left|\operatorname{Cov}(W(\lambda z),x(t_i))\right|
       \to0\quad(\lambda\uparrow\infty).
 \label{eq:tempered-point-cross-limits}
\end{equation}
\end{theorem}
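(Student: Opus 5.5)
Write the posterior as in Eq.~\eqref{eq:posterior}, $C_W^{\mathrm{post}}(z,z')=C_W(z,z')-q_z^{\mathsf T}A^{-1}q_{z'}$, with $A$ fixed. The argument splits into two independent parts. The first is the prior endpoint limits of $C_W(\lambda z,\lambda z')$, compact-uniformly in $K$. The second is the cross-covariance limits \eqref{eq:tempered-point-cross-limits}. Once both are in hand, the correction terms are bounded by $\|A^{-1}\|\sup_K\|q_{\lambda z}\|^2$. This bound is $o(\lambda^{2p})$ as $\lambda\downarrow0$ and $o(1)$ as $\lambda\uparrow\infty$, exactly as in the proof of Proposition~\ref{prop:posterior-recovery}.

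\emph{Prior limits.} By Eq.~\eqref{eq:crossspectral} and the substitution $u=\lambda\omega$,
\begin{equation*}
 C_W(\lambda z,\lambda z')=\frac{\sqrt{aa'}}{2\pi}\int_{\mathbb R}\lambda^{-1}\cdot\lambda\, S_{p,\kappa}(u/\lambda)e^{iu(b-b')}\overline{\widehat\psi(au)}\widehat\psi(a'u)\,du .
\end{equation*}
Since $S_{p,\kappa}(u/\lambda)=C\lambda^{2p}(\lambda^2\kappa^2+u^2)^{-p}$, we get $\lambda^{-2p}S_{p,\kappa}(u/\lambda)\uparrow C|u|^{-2p}$ as $\lambda\downarrow0$. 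This is dominated by $C|u|^{-2p}$, and the product with $|\widehat\psi(au)\widehat\psi(a'u)|$ is integrable uniformly for $a,a'$ in a compact interval: near zero it is $O(|u|^{2M-2p})$ with $2p<2M+1$, and at infinity we have Schwartz decay. As $\lambda\uparrow\infty$, $S_{p,\kappa}(u/\lambda)\to S_{p,\kappa}(0)$ boundedly. Plancherel then identifies the limit with $H_0$. For uniformity on $K$ I would bound the difference of integrands by $|\lambda^{-2p}S(u/\lambda)-C|u|^{-2p}|\cdot G(u)$, where $G(u)=\sup_{a\in[a_0,a_1]}|\widehat\psi(au)|^2$ is an integrable envelope independent of $b,b'$ (its modulus is unaffected by the phase). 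Dominated convergence applied to this single majorant gives the uniformity directly.

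\emph{Cross-covariances.} We have
\begin{equation*}
 \operatorname{Cov}(W(\lambda z),x(t_i))=\frac{\sqrt{\lambda a}}{2\pi}\int S_{p,\kappa}(\omega)e^{i\omega(t_i-\lambda b)}\overline{\widehat\psi(\lambda a\omega)}\,d\omega ,
\end{equation*}
up to the paper's sign convention. This equals $\langle D_\lambda h_z, g_i\rangle$-type pairings with $g_i(\omega)=S_{p,\kappa}(\omega)^{1/2}e^{-i\omega t_i}\in L^2$, because $p>1/2$ makes $S_{p,\kappa}$ integrable. Here $h_z$ is $S^{1/2}$ against $\widehat\psi$, but $S$ is not homogeneous, so I would treat the endpoints separately.

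As $\lambda\uparrow\infty$, the bound $|\cdot|\le \frac{\sqrt{\lambda a}}{2\pi}\int S(\omega)|\widehat\psi(\lambda a\omega)|\,d\omega$ works. Using $|\widehat\psi(v)|\le c|v|^M\wedge c$ and $S\le S(0)$, the substitution $v=\lambda a\omega$ gives $O(\lambda^{-1/2})$, uniformly on $K$.

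As $\lambda\downarrow0$, substitute $u=\lambda\omega$ to get $\lambda^{-1/2}\sqrt a\int S(u/\lambda)e^{iu(t_i/\lambda-b)}\overline{\widehat\psi(au)}\,du/(2\pi)$. Then $\lambda^{-p}$ times this is bounded by $\lambda^{p-1/2}\int C(\lambda^2\kappa^2+u^2)^{-p}|\widehat\psi(au)|\,du\le C\lambda^{p-1/2}\int|u|^{-2p}|\widehat\psi(au)|\,du$. This last integral is finite only when $2p<M+1$, which is the main obstacle: for $M+1\le 2p<2M+1$ this crude bound diverges at zero. In that range I would instead split at $|u|=\lambda$. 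On $|u|\ge\lambda$, the estimate $|\widehat\psi(au)|\le c|u|^M$ gives $\lambda^{p-1/2}\int_{\lambda}^{1}u^{M-2p}\,du$. This is $O(\lambda^{p-1/2})$, $O(\lambda^{p-1/2}\log(1/\lambda))$, or $O(\lambda^{M+1/2-p})$, and each tends to zero since $1/2<p<M+1/2$. On $|u|<\lambda$, the bound $S\le C\kappa^{-2p}$ gives $\lambda^{-p}\lambda^{-1/2}\cdot\lambda^{2p}\kappa^{-2p}\lambda^{M+1}$, which also vanishes. All constants depend only on $\sup_K a$, so the limits are uniform on $K$; notably, the oscillating phase $e^{iut_i/\lambda}$ is never needed. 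Combining these with the prior limits through the correction bound above yields \eqref{eq:tempered-posterior-limits}.
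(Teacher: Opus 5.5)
Your proof is correct. For the prior limits and the final assembly it matches the paper: the substitution $u=\lambda\omega$, dominated convergence with a $K$-uniform envelope, Parseval for $H_0$, and the correction bound $\|A^{-1}\|\sup_K\|q_{\lambda z}\|^2$.

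You depart from the paper on the point-data cross-covariances.

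\emph{The paper's route.} It writes $q_{\lambda z,i}=\langle h_{\lambda,z},g_i\rangle_2$ with $g_i=\sqrt{S_{p,\kappa}/2\pi}\,e^{-it_i\omega}\in L^2$. It bounds the full norms of $\lambda^{-p}h_{\lambda,z}$ and $h_{\lambda,z}$ uniformly on $K$. It then shows that the parts supported on an annulus $\{\delta\le|\omega|\le R\}$ vanish: at rate $O(\lambda^{M+1/2-p})$ at small scales, and by Schwartz decay at large scales. Approximating $g_i$ by its annular truncations gives uniform weak convergence to zero, a uniform-in-$K$ version of Lemma~\ref{lem:weak-dilation}.

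\emph{Your route.} You instead bound $|\operatorname{Cov}(W(\lambda z),x(t_i))|$ directly in $L^1$, discarding the phase. This gives explicit rates: $O(\lambda^{-1/2})$ at large scales, and $O(\lambda^{\min(p-1/2,\,M+1/2-p)})$ up to a logarithm when $2p=M+1$ at small scales. The posterior correction then inherits the squared rates. The paper's argument gives no rate, but it applies unchanged to any fixed observation vector $g\in L^2$, i.e.\ any finite-variance linear observation of the process. Your estimates use the specific form $S_{p,\kappa}(\omega)e^{-it_i\omega}$ and the bounds $S_{p,\kappa}\le S_{p,\kappa}(0)$ and $\lambda^{-2p}S_{p,\kappa}(u/\lambda)\le C|u|^{-2p}$.

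Some bookkeeping needs fixing:
\begin{itemize}
\item The large-scale bound needs $\widehat\psi\in L^1$, which holds because $\psi$ is Schwartz; the bound $|\widehat\psi(v)|\le c|v|^M\wedge c$ you quote is not integrable.
\item Your small-scale split omits the region $|u|\ge1$. It is harmless: $(\lambda^2\kappa^2+u^2)^{-p}\le|u|^{-2p}$ is integrable there since $p>1/2$, and $\widehat\psi$ is bounded.
\item On $|u|<\lambda$ the correct estimate is $\lambda^{-p-1/2}\,C\kappa^{-2p}\,O(\lambda^{M+1})=O(\lambda^{M+1/2-p})$, without your extra factor $\lambda^{2p}$. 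It still vanishes because $p<M+1/2$.
\item Your constants depend on $\inf_K a$ as well as $\sup_K a$, for example through the factor $(\lambda a)^{-1/2}$ at large scales.
\end{itemize}
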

\noindent\textit{Proof.}
Changing variables $u=\lambda\omega$ in
Eq.~\eqref{eq:crossspectral} yields a prior covariance with spectral
factor $S_{p,\kappa}(u/\lambda)$.
At the small-scale endpoint,
$\lambda^{-2p}S_{p,\kappa}(u/\lambda)
=C(\kappa^2\lambda^2+u^2)^{-p}\to C|u|^{-2p}$ for $u\ne0$;
the factor is bounded by $C|u|^{-2p}$.  At the large-scale endpoint,
$S_{p,\kappa}(u/\lambda)\to S_{p,\kappa}(0)$ and is bounded by
$S_{p,\kappa}(0)$.  On $z,z'\in K$, the Fourier transforms of the
Schwartz wavelets have uniformly bounded envelopes of order $|u|^M$
at zero and faster-than-polynomial decay at infinity.  Since
$2M-2p>-1$, dominated convergence gives both prior limits
uniformly on $K\times K$.  Parseval's identity identifies the
constant-spectrum limit with Eq.~\eqref{eq:tempered-low-limit}.

It remains to establish the point-data limits, rather than apply the
homogeneous-field theorem to point masses.  In $L^2(\mathbb R,d\omega)$
put
\begin{equation*}
 h_{\lambda,z}(\omega)=\sqrt{\frac{S_{p,\kappa}(\omega)}{2\pi}}
   \sqrt{\lambda a}\,e^{-i\lambda b\omega}
       \widehat\psi(\lambda a\omega),\qquad
 g_i(\omega)=\sqrt{\frac{S_{p,\kappa}(\omega)}{2\pi}}
       e^{-it_i\omega}.
\end{equation*}
Then $q_{\lambda z,i}=\langle h_{\lambda,z},g_i\rangle_2$.
The point-data vector $g_i$ belongs to $L^2$ because $p>1/2$;
indeed $\|g_i\|_2^2=k(0)<\infty$.
The full $L^2$ norms of $\lambda^{-p}h_{\lambda,z}$ for
$\lambda\downarrow0$ and of $h_{\lambda,z}$ for
$\lambda\uparrow\infty$ are bounded uniformly on $K$, by the
same dominating spectral factors as above.  For an annulus
$E_{\delta,R}=\{\delta\leq|\omega|\leq R\}$, with
$0<\delta<R<\infty$, the wavelet's $M$ zero moments give
\begin{equation*}
 \sup_{z\in K}\|\lambda^{-p}h_{\lambda,z}
            \mathbf1_{E_{\delta,R}}\|_2
       =O_K(\lambda^{M+1/2-p})\to0
       \quad(\lambda\downarrow0).
\end{equation*}
Schwartz decay likewise gives
$\sup_{z\in K}\|h_{\lambda,z}\mathbf1_{E_{\delta,R}}\|_2
\to0$ as $\lambda\uparrow\infty$.
For any $g\in L^2$, the annular truncations
$g\mathbf1_{E_{\delta,R}}$ approximate $g$ in $L^2$ as
$\delta\downarrow0$ and $R\uparrow\infty$.  Cauchy--Schwarz,
the uniform full-norm bounds, and the annular estimates therefore
prove that $\lambda^{-p}h_{\lambda,z}$ and $h_{\lambda,z}$ converge
weakly to zero at their respective endpoints, uniformly for $z\in K$.
Taking $g=g_i$ proves Eq.~\eqref{eq:tempered-point-cross-limits}
directly.  Gaussian conditioning gives
$C_W^{\mathrm{post}}(\lambda z,\lambda z')
=C_W(\lambda z,\lambda z')
-q_{\lambda z}^{\mathsf T}A^{-1}q_{\lambda z'}$.
Because $A^{-1}$ is fixed and $n$ is finite, the normalized
correction vanishes uniformly at small scales and the unnormalized
correction vanishes uniformly at large scales.  Combining this with
the prior limits proves Eq.~\eqref{eq:tempered-posterior-limits}.
\hfill$\square$

The restriction $p>1/2$ is exactly what makes the point variance
$k(0)=(2\pi)^{-1}\int S_{p,\kappa}$ finite.  The upper bound
$p<M+1/2$ ensures integrability of $H_{2p}$ for a wavelet with $M$
vanishing moments and is sharp when its first nonzero Fourier Taylor
coefficient has order exactly $M$; at equality the integral diverges
logarithmically.  A particular wavelet with more vanishing moments
may allow a larger $p$.  The assumption $\kappa>0$ makes $S(0)$ finite.
The small-scale kernel $H_{2p}$ is the prior of the homogeneous
generalized field above with $\gamma=2p$; $H_0$ is a constant-spectrum
white-noise prior on test functions.  Neither limiting generalized
field admits the same point evaluations as the finite-variance process,
so these are limits of \emph{conditioned wavelet covariances}, not
limits obtained by conditioning either limiting field at points.
The different spectral weights make the two prior kernels distinct
on the wavelet family.  Exact finite-scale dilation symmetry is not
asserted for Eq.~\eqref{eq:tempered-spectrum}.

For a numerical illustration with no additional figure, use the
Mexican-hat wavelet in Eq.~\eqref{eq:mexican-hat} and set
$M=2$, $C=\kappa=1$, $p=1$.  Observe $y=x(0)+\varepsilon$ with
$\operatorname{Var}(\varepsilon)=1/2$; then $k(0)=1/2$ and $A=1$.
Writing $V^{\mathrm{post}}(\lambda,0)$ for its posterior variance,
the endpoint values follow from Eqs.~\eqref{eq:tempered-high-limit} and
\eqref{eq:tempered-low-limit}:
\begin{equation*}
 \lim_{\lambda\downarrow0}\lambda^{-2}
 V^{\mathrm{post}}(\lambda,0)=\frac23,\qquad
 \lim_{\lambda\uparrow\infty}V^{\mathrm{post}}(\lambda,0)=1.
\end{equation*}
Direct quadrature of the spectral variance and point cross-covariance
gives $\lambda^{-2}V^{\mathrm{post}}(\lambda,0)=0.665915$
and $0.659199$ for $\lambda=10^{-3}$ and $10^{-2}$, respectively;
$V^{\mathrm{post}}(\lambda,0)=0.904853$ and $0.992232$ for
$\lambda=10$ and $10^2$, respectively.  The point-data correction
is nonzero at finite scales and vanishes in the required normalization
at both endpoints.

\paragraph{One Mexican-hat observation: closed forms and verification.}
Take
\begin{equation}
 \psi(t)=k(1-t^2)e^{-t^2/2},\qquad
 k=\frac{2}{\sqrt3\,\pi^{1/4}},\qquad
 \widehat\psi(\omega)=k\sqrt{2\pi}\,\omega^2e^{-\omega^2/2}.
 \label{eq:mexican-hat}
\end{equation}
This real Schwartz wavelet has $\|\psi\|_2=1$ and two vanishing moments.
Hence $M=2$ and every $\gamma<5$ is admissible.  Write
$\nu=(5-\gamma)/2>0$, $B_\gamma=Ck^2\Gamma(\nu)$,
$\beta=(a^2+a'^2)/2$, and $\delta=b-b'$.
Let $\Phi_\nu(x)={}_1F_1(\nu;1/2;x)$, where
${}_1F_1(\alpha;\beta;x)=\sum_{j=0}^\infty
(\alpha)_j x^j/((\beta)_j j!)$ and $(\alpha)_j$ is the rising factorial.
Direct substitution in Eq.~\eqref{eq:crossspectral} gives
\begin{align}
 C_W(z,z')
 &=Ck^2(aa')^{5/2}\int_{\mathbb R}|\omega|^{4-\gamma}
       e^{-\beta\omega^2}\cos(\delta\omega)\,d\omega \notag\\
 &=B_\gamma(aa')^{5/2}\beta^{-\nu}
       \Phi_\nu\!\left(-\frac{\delta^2}{4\beta}\right).
 \label{eq:mexican-covariance}
\end{align}
To verify the integral, expand the cosine, use
$\int_{\mathbb R}|\omega|^{2\nu+2j-1}e^{-\beta\omega^2}\,d\omega
=\beta^{-\nu-j}\Gamma(\nu+j)$, and sum the resulting series.
Termwise integration is justified by the integrable bound
\begin{equation*}
 |\omega|^{2\nu-1}e^{-\beta\omega^2}\cosh(|\delta\omega|).
\end{equation*}
In particular,
\begin{align}
 V(a)&=B_\gamma a^\gamma,\notag\\
 C_W\bigl((a,0),(a',0)\bigr)
 &=B_\gamma(aa')^{\gamma/2}
       \left(\frac{2aa'}{a^2+a'^2}\right)^\nu.
 \label{eq:mexican-crossscale}
\end{align}
For $\gamma=0$, $B_0=C$, providing the white-noise normalization check.

Fix $a_0>0$ and observe $y=W(a_0,0)+\varepsilon$, where
$\varepsilon$ is independent centered Gaussian noise with
$0\leq\tau^2=\operatorname{Var}(\varepsilon)<\infty$.
Set
\begin{equation*}
 \eta=\frac{B_\gamma a_0^\gamma}{B_\gamma a_0^\gamma+\tau^2}
       \in(0,1],\qquad
 H_\nu(a,b)=\Phi_\nu\!\left(-\frac{b^2}{2(a^2+a_0^2)}\right).
\end{equation*}
Using Eq.~\eqref{eq:posterior} gives the closed-form posterior variance
\begin{equation}
 V^{\mathrm{post}}(a,b)=B_\gamma a^\gamma
 \left[1-\eta\left(\frac{2aa_0}{a^2+a_0^2}\right)^{5-\gamma}
              H_\nu(a,b)^2\right].
 \label{eq:mexican-posterior}
\end{equation}
At $b=0$, write $r=a/a_0$.  Then
\begin{equation}
 \frac{V(a)-V^{\mathrm{post}}(a,0)}{V(a)}
   =\eta\left(\frac{2r}{1+r^2}\right)^{5-\gamma}>0
   \qquad(0<r<\infty).
 \label{eq:mexican-reduction}
\end{equation}
The fractional reduction attains its maximum $\eta$ at $a=a_0$ and
tends to zero at both scale endpoints.  In particular,
$V^{\mathrm{post}}(a_0,0)=B_\gamma a_0^\gamma\tau^2/
(B_\gamma a_0^\gamma+\tau^2)$, including zero variance for a noiseless
observation.  By Corollary~\ref{cor:broken-symmetry}, the nonzero reduction
breaks the global dilation identity.
More generally, for fixed $a>0,b\in\mathbb R$ the normalized reduction is
\begin{equation}
 \begin{aligned}
 &\lambda^{-\gamma}\bigl[V(\lambda a)
                  -V^{\mathrm{post}}(\lambda a,\lambda b)\bigr]\\
 &\qquad=B_\gamma a^\gamma\eta
       \left(\frac{2\lambda aa_0}{\lambda^2a^2+a_0^2}\right)^{5-\gamma}
       H_\nu(\lambda a,\lambda b)^2.
 \end{aligned}
 \label{eq:mexican-limits}
\end{equation}
The argument of $\Phi_\nu$ in the last factor tends to $0$ as
$\lambda\downarrow0$ and to $-b^2/(2a^2)$ as $\lambda\uparrow\infty$.
It therefore stays in a compact interval where $\Phi_\nu$ is bounded.
Since $5-\gamma>0$, the power factor tends to zero at either endpoint.
Thus $\lambda^{-\gamma}V^{\mathrm{post}}(\lambda a,\lambda b)
\to B_\gamma a^\gamma$ in both limits, as predicted.
In this example $\phi_1=\psi_{a_0,0}$, so $m_\psi=m_1=2$ and
$\alpha=\beta_1=\nu=(5-\gamma)/2$ in
Theorem~\ref{thm:quantitative-recovery}.  Its cross-covariance
coefficients specialize to
\begin{equation*}
 S_1(a,b)=B_\gamma 2^\nu a^{5/2}a_0^{\gamma-5/2},\qquad
 L_1(a,b)=B_\gamma 2^\nu a^{\gamma-5/2}a_0^{5/2}
       \Phi_\nu\!\left(-\frac{b^2}{2a^2}\right).
\end{equation*}
Indeed, expanding the exact expression in
Eq.~\eqref{eq:mexican-limits} gives, for fixed $a>0$ and $b$,
\begin{align}
 \lambda^{-\gamma}\bigl[V(\lambda a)
       -V^{\mathrm{post}}(\lambda a,\lambda b)\bigr]
 &=B_\gamma a^\gamma\eta
    \left(\frac{2a}{a_0}\right)^{5-\gamma}
    \lambda^{5-\gamma}+O(\lambda^{7-\gamma}),
          &&\lambda\downarrow0,\notag\\
 &=B_\gamma a^\gamma\eta
    \left(\frac{2a_0}{a}\right)^{5-\gamma}
    \Phi_\nu\!\left(-\frac{b^2}{2a^2}\right)^2
    \lambda^{-(5-\gamma)}
    +O(\lambda^{-(7-\gamma)}),
          &&\lambda\uparrow\infty.
 \label{eq:mexican-asymptotics}
\end{align}
The exponent $5-\gamma$ at each endpoint is therefore the
$m_\psi=m_1=2$ case of Eq.~\eqref{eq:uniform-posterior-rate}.
The small-scale coefficient is positive; the large-scale coefficient
can vanish at positions where $\Phi_\nu(-b^2/(2a^2))=0$, whereas
at $b=0$ both coefficients are positive.

For the illustrations in Figs.~\ref{fig:fixed-observation-dilation}
and \ref{fig:scale-position-reduction}, set $C=1$, $a_0=1$, and
$y=W(1,0)+\varepsilon$, where $\varepsilon$ is centered Gaussian noise
independent of the field, with variance $\tau_\gamma^2=B_\gamma/4$.
Thus $\eta=0.8$, with the same relative noise strength for each $\gamma$;
for each exponent, the observation and its noise variance remain fixed
throughout target dilation.  These choices apply only to the figures.
The quantities $V$, $V^{\mathrm{post}}$, $C_W$, and
$C_W^{\mathrm{post}}$ below are evaluated at the indicated $\gamma$.
Figure~\ref{fig:fixed-observation-dilation}(a) shows
\begin{equation*}
 R_\gamma(\lambda)
 =\frac{V^{\mathrm{post}}(\lambda,0)}{B_\gamma\lambda^\gamma}
 =1-0.8\left(\frac{2\lambda}{1+\lambda^2}\right)^{5-\gamma}
 \qquad(\gamma=0,1,3).
\end{equation*}
All three curves satisfy $R_\gamma(1)=0.2$ and approach the prior value
$1$ at both endpoints.  Their dependence on $\lambda$ displays the
failure of exact dilation symmetry with a fixed observation.
Panel (b) uses $\gamma=3$, $z_1=(1,0.5)$, and $z_2=(2,-0.5)$ to show
\begin{equation*}
 G(\lambda)=\frac{\lambda^{-3}
 C_W^{\mathrm{post}}(\lambda z_1,\lambda z_2)}{\sqrt{V(1)V(2)}}.
\end{equation*}
Both target scales and positions are dilated simultaneously while the
observation stays fixed.  This is a covariance normalized by prior
standard deviations, not a posterior correlation coefficient; its prior
baseline is $C_W(z_1,z_2)/\sqrt{V(1)V(2)}$, with all covariances and
variances evaluated at $\gamma=3$.  The finite plotting range illustrates
the symmetry breaking and endpoint recovery proved above.

\begin{figure}[tbp]
  \centering
  \includegraphics[width=\linewidth]{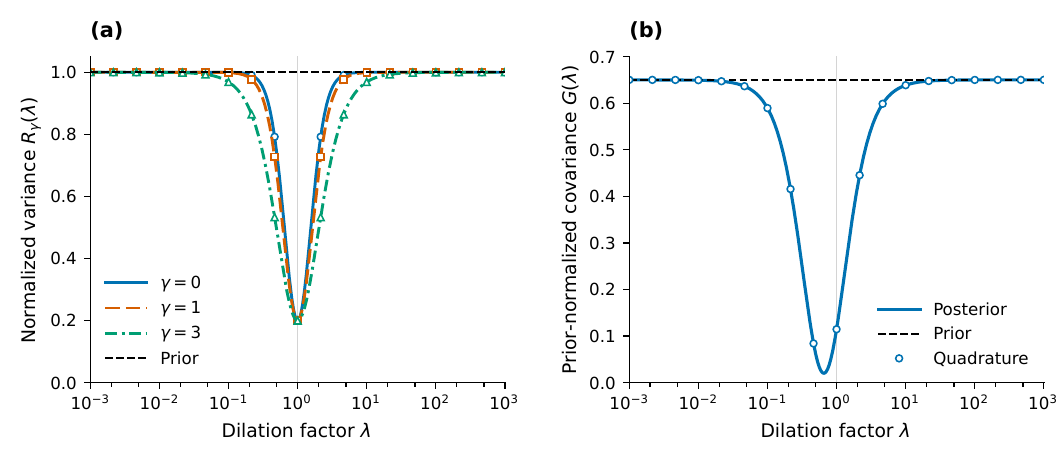}
  \caption{Fixed-observation dilation for the Mexican-hat field with
  $C=1$, $a_0=1$, and $y=W(1,0)+\varepsilon$. The centered Gaussian noise
  is independent of the field, with $\tau_\gamma^2=B_\gamma/4$ and
  $\eta=0.8$; the observation and noise variance are fixed as $\lambda$
  varies.
  (a) $R_\gamma(\lambda)=V^{\mathrm{post}}(\lambda,0)/
  (B_\gamma\lambda^\gamma)$ for $\gamma=0,1,3$ (solid, dashed, and
  dash-dotted curves, respectively).
  (b) For $\gamma=3$, $z_1=(1,0.5)$ and $z_2=(2,-0.5)$, the solid curve
  shows $G(\lambda)=\lambda^{-3}C_W^{\mathrm{post}}(\lambda z_1,
  \lambda z_2)/\sqrt{V(1)V(2)}$, with both target scales and positions
  dilated simultaneously. The normalization uses prior standard
  deviations, not posterior ones; all quantities in (b) use $\gamma=3$.
  Black dashed lines show the prior values $1$ and
  $C_W(z_1,z_2)/\sqrt{V(1)V(2)}$, respectively. Open markers show
  independent spectral quadrature, and thin vertical lines mark
  $\lambda=1$.}
  \label{fig:fixed-observation-dilation}
\end{figure}

Figure~\ref{fig:scale-position-reduction} shows the scale and position
dependence of the same observation's influence.  In local dimensionless
coordinates $r=a/a_0$ and $u=b/a_0$, define the fractional variance reduction
\begin{equation*}
 D_\gamma(r,u)
 =\frac{V(a_0r)-V^{\mathrm{post}}(a_0r,a_0u)}{V(a_0r)}
 =\eta\left(\frac{2r}{1+r^2}\right)^{5-\gamma}
 H_\nu(a_0r,a_0u)^2,
\end{equation*}
where the last equality is Eq.~\eqref{eq:mexican-posterior}.
This quantity is neither the posterior variance itself nor a correlation
coefficient.  For the plotted $a_0=1$ and $\eta=0.8$,
$D_\gamma(r,0)=1-R_\gamma(r)$, linking the heatmaps to
Fig.~\ref{fig:fixed-observation-dilation}(a).
The $\gamma=0$ panel has pronounced spatial side maxima, arising from
the squared covariance with the observation in Eq.~\eqref{eq:posterior}.
For $\gamma=3$, the central influence extends over a broader range of
scales.  Neither the displayed window nor its dark-purple regions imply
that the influence is exactly zero.

\begin{figure}[tbp]
  \centering
  \includegraphics[width=\linewidth]{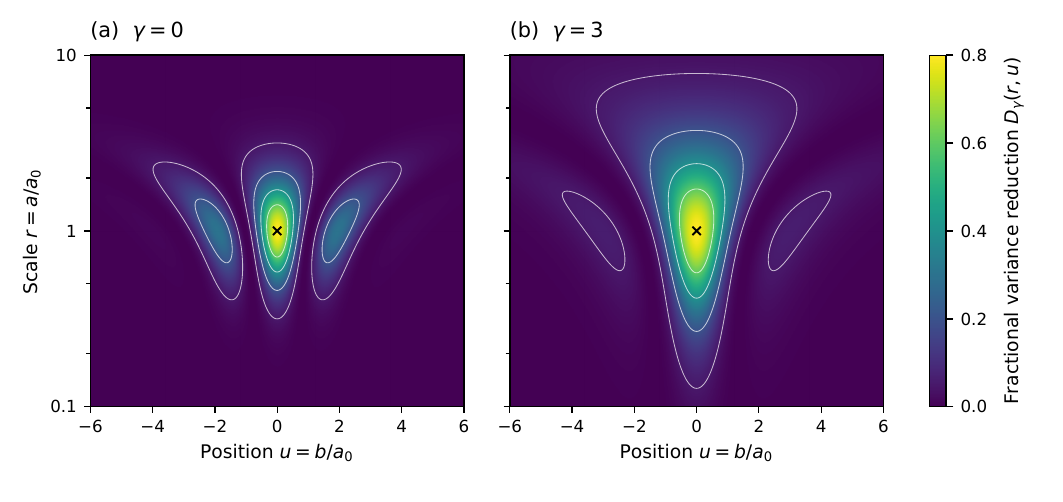}
  \caption{Scale and position dependence of fractional variance reduction
  for (a) $\gamma=0$ and (b) $\gamma=3$. The Mexican-hat field has $C=1$
  and the fixed observation $y=W(1,0)+\varepsilon$, with $a_0=1$.
  Centered Gaussian noise independent of the field has fixed variance
  $\tau_\gamma^2=B_\gamma/4$, so $\eta=0.8$ in both panels.
  The horizontal coordinate is $u=b/a_0$; the vertical coordinate
  $r=a/a_0$ is logarithmic. Color shows
  $D_\gamma(r,u)=[V(a_0r)-V^{\mathrm{post}}(a_0r,a_0u)]/V(a_0r)$,
  evaluated at each panel's $\gamma$, on a shared linear scale from
  $0$ to $0.8$. Black crosses mark $(u,r)=(0,1)$, and white contours
  show $D_\gamma=0.05,0.2,0.4,0.6$.}
  \label{fig:scale-position-reduction}
\end{figure}

As an independent numerical check, we evaluated the spectral integral
in Eq.~\eqref{eq:mexican-covariance} by adaptive quadrature, without
using its hypergeometric expression.  The substitution
$x=\sqrt\beta\,\omega$ permits algebraically weighted quadrature on
$[0,1]$ and ordinary quadrature on $[1,12]$; for the parameters below,
the omitted Gaussian tail is negligible at the stated tolerance.
With absolute and relative quadrature tolerances $2\times10^{-12}$,
we checked all 180 combinations of $C=1.7$,
$\gamma\in\{-3,0,1,3,4.8\}$, $a\in\{0.2,0.7,2,5\}$,
$a'\in\{0.3,1.3,3\}$, and $\delta\in\{0,0.8,4\}$.
The largest observed discrepancy divided by $\sqrt{V(a)V(a')}$ was
less than $10^{-14}$.
For $a_0=1.3$, $\tau^2=0.4$, and all 60 combinations of the same
$\gamma,a$ and $b\in\{0,0.8,4\}$, inserting three independently
integrated covariances into Eq.~\eqref{eq:posterior} agreed with
Eq.~\eqref{eq:mexican-posterior} to less than $10^{-14}$ of $V(a)$.
These are finite numerical checks; validity for all $\gamma<5$ follows
from the preceding integral calculation and proof.

Separate saved validation records for the figures compare the conditioned
quantities at 76 parameter points for Fig.~\ref{fig:fixed-observation-dilation}
(four curves, 19 points each) and at 70 combinations of $(\gamma,r,u)$
for Fig.~\ref{fig:scale-position-reduction}.  These are comparison-point
counts, not numbers of integral evaluations.  Both validations evaluate
the original spectral integral in Eq.~\eqref{eq:mexican-covariance}
after nondimensionalization by $x=\sqrt\beta\,\omega$, using adaptive
quadrature on $[0,\infty)$ with absolute and relative tolerances
$10^{-12}$ and no hypergeometric function in the quadrature calculation.
The saved CSV data and JSON summaries give maximum observed differences
from the closed forms of approximately $4.44\times10^{-16}$ for the
normalized quantities in Fig.~\ref{fig:fixed-observation-dilation} and
$2.78\times10^{-16}$ for $D_\gamma$ in
Fig.~\ref{fig:scale-position-reduction}.  These observed differences
are distinct from the integrator's error estimates and are not rigorous
error bounds.

\section{Discussion}
\label{sec:discussion}
\paragraph{Relation to existing results and the present contribution.}
The spectral-filter formulas, wavelet variance power laws, and
cross-scale dependence belong to established wavelet theory
\cite{wornell1992,moulines2007}.
For example, Coeurjolly, Amblard, and Achard derive wavelet covariances
and cross-spectra for multivariate fractional Brownian motion
\cite{coeurjolly2010}.  Their Theorem~3.3 (Theorem~3 in the preprint)
gives the cross-spectral density at different scales, its low-frequency
behavior, and coherence, under their wavelet integrability assumptions
and at least two vanishing moments.  It concerns the prior multivariate
fractional Brownian motion, whose Hurst exponents lie in $(0,1)$;
it does not condition on a fixed finite observation set.
Equations~\eqref{eq:crossspectral}--\eqref{eq:jointscaling} use this
established spectral-filter and homogeneous-covariance structure.
Rasmussen and Williams give the Gaussian conditional covariance for
finite training and test vectors with independent identically distributed
Gaussian observation noise \cite[Eq.~(2.24)]{rasmussen2006}.
Equation~\eqref{eq:posterior} uses the same conditioning formula for
admissible linear observations with fixed noise covariance $N$.
Sottinen and Yazigi's Theorem~3.1 gives the orthogonal representation
and finite-rank covariance subtraction for generalized Gaussian bridges,
conditioned on exact values of finitely many admissible linear
functionals of a continuous Gaussian process on a finite interval,
with an invertible observation Gram matrix
\cite{sottinen2014}.
Our admissible test-function formulation concerns a generalized field;
we therefore prove the finite-dimensional conditioning step directly
rather than assume their continuous-path hypotheses for this field.
The common projection mechanism, including the noiseless case and its
extension to independent Gaussian observation noise, is established
theory.
Marcotte and Gloaguen \cite[abstract and Appendix]{marcotte2015}
provide a closer wavelet-specific precedent: their discrete matrix
construction conditions Gaussian simulations on wavelet coefficients
and hard point data, and their appendix proves that both sets of
constraints are reproduced exactly.  Our comparison with that work
is restricted to the construction described in its published abstract
and accessible appendix.  The present results concern the normalized
posterior covariance along continuous target dilations with fixed
observations, including compact-uniform endpoint recovery and the
necessary and sufficient condition for global dilation symmetry.
Point observations here belong to the finite-variance Mat\'ern-type
model, whereas the homogeneous generalized field requires admissible
test functions.

Pathak and Pathak \cite[Theorems~3.1 and~3.3]{pathak2014} express
large-scale wavelet transforms through moments of the transformed
distribution and derivatives of the wavelet, and small-scale transforms
through wavelet moments and derivatives of the transformed function,
at fixed translation.  The Taylor/moment mechanism and the role of
vanishing coefficients are shared with our quantitative analysis.
Theorem~\ref{thm:quantitative-recovery} carries that mechanism through
the spectrally weighted cross-covariances and the quadratic Gaussian
correction, with simultaneous dilation of scales and positions and
uniform remainders on fixed compact target sets.  Its contribution is
the resulting posterior error coefficients, recovery orders, and
cancellation bounds, not moment asymptotics in themselves.

Conditioning Gaussian measures on closed subspaces of a Hilbert space
is also established theory; Owhadi and Scovel describe the conditional
covariance there by a shorted operator \cite{owhadi2015}.
Their Hilbert-valued Gaussian measure has a trace-class covariance,
whereas the isonormal coordinates on an infinite-dimensional $E$ here
do not form an $E$-valued random vector with identity covariance.
We therefore specify the sigma-field $\mathcal F_E$ using Gaussian
coordinates and derive Eq.~\eqref{eq:closed-space-posterior} directly.
Neither infinite-dimensional conditioning nor the orthogonal
projection identity is claimed as new.  Proposition~
\ref{prop:closed-observation-space} instead characterizes when the
fixed projection of the normalized wavelet dilation orbit vanishes at
each endpoint; the finite-band and high-frequency examples prove that
infinite dimension alone does not determine the answer.

The logarithmic-scale viewpoint is the Lamperti correspondence
\cite{lamperti1962,burnecki1997}.
For the present field, take $\gamma_0=\gamma$ in Section~\ref{sec:scaling} and write
$Y_s(z)=e^{-\gamma s/2}W(e^s z)$.
Equations~\eqref{eq:spectral-isometry}--\eqref{eq:dilation-intertwining} give
\begin{equation*}
 \operatorname{Cov}\bigl(Y_s(z),Y_t(z')\bigr)
 =\langle D_{e^s}h_z,D_{e^t}h_{z'}\rangle_2
 =\langle h_z,D_{e^{t-s}}h_{z'}\rangle_2.
\end{equation*}
Hence any finite vector of these centered Gaussian coefficients is
stationary in logarithmic scale, and its cross-covariances vanish as
$|t-s|\to\infty$ by Lemma~\ref{lem:weak-dilation}.
This identity is derived on positive scales and assumes no extension to
scale zero or positive value of $\gamma/2$.
The posterior recovery result expresses the disappearance of correlations
with a fixed finite set of observations in either logarithmic-scale
direction.  This is distinct from integrating out wavelet detail spaces
in a specified RG construction, as in Ref.~\cite{marchand2022}; no such
additional construction is introduced here.

The principal conclusions established in Section~\ref{sec:posterior} are
the two limits, uniform on fixed compact target sets, in
Proposition~\ref{prop:posterior-recovery} and the necessary and
sufficient symmetry condition in Corollary~\ref{cor:broken-symmetry},
supplemented by coefficient-level recovery rates under the additional
assumptions of Theorem~\ref{thm:quantitative-recovery}.  Weak convergence of normalized
dilations makes the fixed finite observation correction vanish in both
limits in Eq.~\eqref{eq:normalized-correction}.  Combining this recovery
with the prior dilation law shows that global posterior symmetry is
possible exactly when all observation test functions vanish: every
nonzero admissible observation function has a nonzero correction somewhere
on the wavelet family.  This symmetry conclusion does not require
the finite Fourier-order assumptions used for quantitative rates.
For an ordinary Mat\'ern-type process, the spectral form and related
local scaling results are established \cite{rasmussen2006,limteo2009}.
Theorem~\ref{thm:tempered-point-endpoints} adds a two-sided statement
for fixed finite noisy point data: the finite-rank posterior correction
vanishes at each endpoint, but the recovered prior kernels are
$H_{2p}$ and $H_0$, respectively.  The proof treats the point-data
cross-covariances directly; it does not claim the Mat\'ern spectrum,
the individual prior limits, or Gaussian conditioning as new results.
For exact observation of a fixed closed Gaussian subspace, Proposition~
\ref{prop:closed-observation-space} identifies projection-norm decay
as the necessary and sufficient endpoint condition.  Both endpoints
recover for an infinite-dimensional finite frequency band, but the
small-scale endpoint fails when all frequencies $|\omega|\geq1$ are
observed.  The Mexican-hat formulas and independent spectral quadrature
provide an explicit illustration of the finite-observation conclusions.

\paragraph{Interpretation, limitations, and future directions.}
Equation~\eqref{eq:normalized-correction} identifies the effect of finite
observations: their contribution to normalized covariances vanishes
uniformly on fixed compact target sets at both dilation endpoints,
even when it is nonzero at intermediate scales.
Corollary~\ref{cor:broken-symmetry} characterizes preservation
of the global covariance identity; every nonzero admissible observation
function excludes that identity without excluding every equality at a
particular pair of scales.
For the homogeneous field, recovery concerns normalized covariances
and does not assert that the unnormalized posterior covariance approaches
the prior covariance.
It is uniform on each fixed compact target set for
the stated homogeneous field,
with fixed admissible observation functions, a fixed finite observation
count, field-independent Gaussian noise, and positive-definite observation
covariance.  Exact observation of a fixed closed subspace has the
separate criterion in Proposition~\ref{prop:closed-observation-space};
its high-frequency example shows that two-sided recovery need not hold
for infinite-dimensional observations.  No uniform limit over the full
target domain or operator-norm limit is proved; the coefficient-level
rates require the additional
finite Fourier-order assumptions of
Theorem~\ref{thm:quantitative-recovery}.  Point observations of this
homogeneous field remain undefined; Theorem~\ref{thm:tempered-point-endpoints}
instead covers the particular finite-variance spectrum in
Eq.~\eqref{eq:tempered-spectrum}.  Arbitrary ordinary Gaussian processes
beyond that model, non-Gaussian observation noise, and observation sets
that change with dilation are outside the results.
Future work may examine nonstationary models or observation families
that vary with scale.  A genuine RG construction would additionally
require specified elimination of degrees of freedom and composition
across scales; neither extension is carried out here.

\section{Conclusion}
For the centered homogeneous generalized Gaussian field on $\mathcal S_M$
with spectral covariance weight $C|\omega|^{-\gamma}$ and
$\gamma<2M+1$, Proposition~\ref{prop:posterior-recovery} proves recovery
of the prior wavelet covariance at both dilation endpoints after
multiplication of the posterior covariance by $\lambda^{-\gamma}$,
with target scales and positions dilated simultaneously.  The recovery
is uniform when both targets range over any fixed compact subset of
$(0,\infty)\times\mathbb R$.
The observations are fixed finite admissible linear functionals with
Gaussian noise independent of the field and positive-definite total
observation covariance.  Under these conditions,
Corollary~\ref{cor:broken-symmetry} proves that the posterior satisfies
the original dilation identity for every target pair and every positive
dilation factor if and only if the observation correction is identically
zero on the wavelet family, equivalently if all observation test functions
vanish.  Any nonzero admissible observation function therefore breaks
this global symmetry.
When there is at least one nonzero observation function and the Fourier
transforms have finite first nonzero orders,
Theorem~\ref{thm:quantitative-recovery} gives their leading
cross-covariance coefficients and compact-uniform posterior recovery
orders $2m_\psi+1-\gamma$ and $2\min_i m_i+1-\gamma$, with explicit
qualification when the leading coefficients vanish.

For the finite-variance Mat\'ern-type spectrum
$C(\kappa^2+\omega^2)^{-p}$ with $1/2<p<M+1/2$,
Theorem~\ref{thm:tempered-point-endpoints} establishes a different
two-endpoint result under fixed finite noisy point observations.
After multiplication by $\lambda^{-2p}$ as $\lambda\downarrow0$,
the posterior wavelet covariance converges to the homogeneous
$|\omega|^{-2p}$ prior kernel.  Without normalization as
$\lambda\uparrow\infty$, it converges to the distinct
constant-spectrum $S(0)$ prior kernel.  The point-data cross-covariances
vanish in the respective normalizations, and both covariance limits
are uniform on fixed compact target sets.  The point observations
belong to the finite-variance process, not to either limiting
generalized field.

For exact observations of an arbitrary fixed closed Gaussian subspace,
Proposition~\ref{prop:closed-observation-space} replaces the finite
matrix correction by orthogonal projection and proves that endpoint
recovery holds exactly when $\|P_ED_\lambda h_z\|_2\to0$ for every
target $z$; this criterion also gives compact-uniform recovery.
The symmetric finite band $1\leq|\omega|\leq2$ is
infinite-dimensional yet recovers at both endpoints.  Observation of
all $|\omega|\geq1$ gives a zero normalized posterior kernel as
$\lambda\downarrow0$ and recovers the prior as
$\lambda\uparrow\infty$.  The projection formula itself is standard
Gaussian conditioning; the endpoint criterion and these examples
identify the role of finite observation count in the earlier result.

The Mexican-hat formulas
\eqref{eq:mexican-covariance}--\eqref{eq:mexican-asymptotics} exhibit this
combination of symmetry breaking and normalized covariance recovery for
$\gamma<5$, with the exponent $5-\gamma$ at both endpoints as the
$m_\psi=m_1=2$ case of the general result.
At the observation position, the fractional variance
reduction is maximal at the observed scale and vanishes at both scale
endpoints.  These formulas illustrate the general theorems; Gaussian
conditioning, projection identities, and Taylor/moment expansions remain
the established tools on which those theorems build.
The limits concern fixed compact target sets, not the full target domain,
and do not cover changing observation families or arbitrary ordinary
Gaussian processes beyond Eq.~\eqref{eq:tempered-spectrum}.
The dilation identities express covariance scaling, not scale
independence or an RG construction.


\begin{thebibliography}{10}
\bibitem{wornell1992}
G.~W. Wornell and A.~V. Oppenheim,
``Estimation of fractal signals from noisy measurements using wavelets,''
\emph{IEEE Transactions on Signal Processing} \textbf{40}, 611--623 (1992).

\bibitem{moulines2007}
E. Moulines, F. Roueff, and M.~S. Taqqu,
``On the spectral density of the wavelet coefficients of long-memory
time series with application to the log-regression estimation of the
memory parameter,'' \emph{Journal of Time Series Analysis}
\textbf{28}, 155--187 (2007); arXiv:math/0512635.

\bibitem{coeurjolly2010}
J.-F. Coeurjolly, P.-O. Amblard, and S. Achard,
``Wavelet analysis of the multivariate fractional Brownian motion,''
\emph{ESAIM: Probability and Statistics} \textbf{17}, 592--604 (2013);
doi:10.1051/ps/2012011; arXiv:1007.2109.

\bibitem{marchand2022}
T. Marchand, M. Ozawa, G. Biroli, and S. Mallat,
``Multiscale Data-Driven Energy Estimation and Generation,''
\emph{Physical Review X} \textbf{13}, 041038 (2023);
doi:10.1103/PhysRevX.13.041038;
preprint: ``Wavelet Conditional Renormalization Group,''
arXiv:2207.04941 (2022).

\bibitem{rasmussen2006}
C.~E. Rasmussen and C.~K.~I. Williams,
\emph{Gaussian Processes for Machine Learning},
MIT Press, Cambridge, MA (2006), Chapters~2 and~4.

\bibitem{sottinen2014}
T. Sottinen and A. Yazigi,
``Generalized Gaussian bridges,''
\emph{Stochastic Processes and their Applications}
\textbf{124}, 3084--3105 (2014);
doi:10.1016/j.spa.2014.04.002; arXiv:1205.3405.

\bibitem{lamperti1962}
J. Lamperti,
``Semi-stable stochastic processes,''
\emph{Transactions of the American Mathematical Society}
\textbf{104}, 62--78 (1962).

\bibitem{burnecki1997}
K. Burnecki, M. Maejima, and A. Weron,
``The Lamperti transformation for self-similar processes,''
\emph{Yokohama Mathematical Journal} \textbf{44}, 25--42 (1997).

\bibitem{limteo2009}
S.~C. Lim and L.~P. Teo,
``Generalized Whittle--Mat\'ern random field as a model of correlated
fluctuations,'' \emph{Journal of Physics A: Mathematical and Theoretical}
\textbf{42}, 105202 (2009); doi:10.1088/1751-8113/42/10/105202.

\bibitem{owhadi2015}
H. Owhadi and C. Scovel,
``Conditioning Gaussian measure on Hilbert space,''
arXiv:1506.04208 (2015).

\bibitem{marcotte2015}
D. Marcotte and E. Gloaguen,
``Exact Conditioning of Gaussian Fields on Wavelet Coefficients,''
\emph{Mathematical Geosciences} \textbf{47}, 277--300 (2015);
doi:10.1007/s11004-014-9552-z.

\bibitem{pathak2014}
R.~S. Pathak and A. Pathak,
``Moment Asymptotic Expansions of the Wavelet Transforms,''
arXiv:1405.0146 (2014).
\end{thebibliography}
\end{document}